\documentclass[a4paper, 12pt,oneside,reqno]{amsart}
\usepackage[a4paper]{geometry}
\usepackage[utf8]{inputenc}
\usepackage[english]{babel}
\usepackage[symbol]{footmisc}
\usepackage{amssymb,amsmath,amsthm,amsfonts,xcolor,enumerate,hyperref,comment,longtable,cleveref}

\usepackage{times}
\usepackage{cite}
\usepackage{pdflscape}
\usepackage{ulem}
\usepackage[mathcal]{euscript}
\usepackage{tikz}
\usepackage{hyperref}
\usepackage{cancel} 
\usepackage{stmaryrd}
\usepackage[all]{xy}
\usepackage{pdfsync}
\usepackage{ytableau}
\usepackage{rotating}

\usetikzlibrary{arrows}

\def\pre{\mathrm{pre}}

\def\Alt{\mathrm{Alt}}
\def\Var{\mathrm{Var}}
\def\Bicom{\mathrm{Bicom}}
\def\As{\mathrm{As}}
\def\Zin{\mathrm{Zin}}
\def\Pre{\mathrm{Pre}}

\def\Lie{\mathrm{Lie}}
\def\Leib{\mathrm{Leib}}

\def\Nov{\mathrm{Nov}}
\def\com{\mathrm{com}}

\def\Ind{\mathrm{Ind}}
\def\Flex{\mathrm{Flex}}
\def\anti{\mathrm{anti}}
\def\Assosym{\mathrm{Assosym}}
\def\non{\mathrm{non}}
\def\Dend{\mathrm{Dend}}

\let\<\langle
\let\>\rangle

\def\vecotimes{\mathbin{\vec\otimes}}

\theoremstyle{definition}
\newtheorem{definition}{Definition}[section]
\newtheorem{remark}[definition]{Remark}
\newtheorem{example}[definition]{Example}

\theoremstyle{plain}
\newtheorem{lemma}[definition]{Lemma}

\newtheorem{corollary}[definition]{Corollary}
\newtheorem{theorem}[definition]{Theorem}

\begin{document}

\title{Nonsymmetric versions of binary quadratic operads}

\author{F. A. Mashurov and  B. K. Sartayev}

\address{SICM, Southern University of Science and Technology, Shenzhen, 518055, China}

\address{
SDU University, Kaskelen, Kazakhstan}

\email{f.mashurov@gmail.com}

\address{Narxoz University, Almaty, Kazakhstan}

\address{Institute of Mathematics and Mathematical Modeling, Almaty, Kazakhstan}

\email{baurjai@gmail.com}

\subjclass[2020]{17A30, 17A50}
\keywords{nonsymmetric operad, polynomial identities, associative algebra}

\thanks{.}

\begin{abstract}
In this paper, we study the white Manin product of the associative operad $\As$ with a binary quadratic operad $\Var$. We introduce the notion of a nonsymmetric version of $\Var$ and provide a criterion for determining when the operad $\As\circ\Var$ has this property. We illustrate the construction with several examples and counterexamples. Finally, for some operads admitting nonsymmetric versions, we describe their combinatorial properties.
\end{abstract}

\maketitle

\section{Introduction}\label{sec:intro}

Operads were introduced as a language for describing compositions of operations and have become a standard framework for encoding algebraic structures of many kinds \cite{May,MarklShniderStasheff,LodayVallette}. From the algebraic point of view, an operad packages families of multilinear operations together with the rules governing substitution of one operation into another. This perspective is particularly effective when one seeks to study an entire type of algebras rather than a single example, since the relevant operations and identities may then be collected into one operadic object.

More precisely, a nonsymmetric operad over a field $\mathbf{k}$ is a sequence $\mathcal{P}=\{\mathcal{P}(n)\}_{n \geq 1}$ of vector spaces endowed with composition maps
\[
\gamma_{k_1,\dots,k_n}\colon \mathcal{P}(n) \otimes \mathcal{P}(k_1) \otimes \cdots \otimes \mathcal{P}(k_n) \to \mathcal{P}(k_1+\cdots+k_n)
\]
and a unit element $\mathbf{1} \in \mathcal{P}(1)$ satisfying the usual associativity and unit axioms. A symmetric operad is defined in the same way, except that each space $\mathcal{P}(n)$ is moreover equipped with a compatible action of the symmetric group $S_n$.

A basic reason for the usefulness of operads is that each operad governs a category of algebras of a fixed type, so results proved at the operadic level apply uniformly to all algebras in that category. The corresponding notion of a free operad is analogous to that of a free algebra: it is generated by operations subject only to operadic composition, and presentations by generators and relations arise as quotients of free operads. These ideas are particularly effective for binary quadratic operads, which occupy a central place in algebraic operad theory and in the study of Manin products \cite{GinzburgKapranov,LodayVallette}. In recent years, the white Manin product has been used to solve several natural problems, such as constructing embeddings \cite{KSO2019, KMS2024} and determining whether an operad satisfies the Dong property \cite{Dong}.

In this paper, we consider the white Manin product $\mathrm{As} \circ \mathrm{Var}$, where $\mathrm{As}$ is the associative operad and $\mathrm{Var}$ is a binary quadratic operad. The product with $\mathrm{As}$ naturally separates the symmetry of a binary generator into two oriented operations, which suggests the problem of whether the resulting symmetric operad arises from an underlying nonsymmetric one. This leads to the notion of a nonsymmetric version of $\mathrm{Var}$: roughly speaking, one asks whether $\mathrm{As} \circ \mathrm{Var}$ recovers $\mathrm{Var}$ after identifying those binary operations that differ only by the residual transposition symmetry. The case, when $\mathrm{Var}=\mathrm{Nov}$ is Novikov operad, one obtains the class of noncommutative Novikov algebras. These algebras were investigated in detail by Sartayev and Kolesnikov \cite{erlagol2021} as an embedding of noncommutative Novikov algebras into associative algebras with a derivation $d$ as follows:
\[
\non\com\textrm{-}\Nov\hookrightarrow \As^{(d)},\;\;a\succ b=d(a)b\;\;\textrm{and}\;\;a\prec b=ad(b).
\]
Later, Dauletiyarova and Sartayev constructed a basis for the free noncommutative Novikov algebra \cite{DauSar}. For the other results on noncommutative Novikov algebras, see \cite{NonComNov2, NonComNov1}.

In the case $\Var=\Zin$ operad of the variety of Zinbiel algebras, then we will see that the nonsymmetric version of the Zinbiel operad coincides with the dendriform operad. The dendriform operad was initially introduced by Loday in \cite{Loday}. Before the construction of embeddings of noncommutative Novikov algebras, Aguiar gave an embedding of dendriform algebras into associative algebras with a Rota-Baxter operator $R$ as follows:
\[
\Dend\hookrightarrow\As^{(R)},\;\;a\succ b=R(a)b\;\;\textrm{and}\;\;a\prec b=aR(b).
\]
For more details on such embedding, see \cite{Aguiar, GubarevKolesnikov2013}.

However, as we will see, not every binary quadratic operad admits a nonsymmetric version. Thus, the notion of a nonsymmetric version separates binary quadratic operads into two classes: those that satisfy the required condition and those that do not. For example, in addition to the cases discussed above, the bicommutative and flexible operads belong to the first class. In contrast, well-known operads such as the alternative, pre-Lie, and Leibniz operads belong to the second one.

In general, it is interesting to consider the mapping given above for operads that admit a nonsymmetric version, i.e., the embedding of such operads into an associative with some operator. In the case $\Var=\Bicom$, a nonsymmetric bicommutative operad can be embedded into an associative operad with operator $\mathcal{S}$ satisfying
\begin{equation}\label{bicomoperator}
a\;\mathcal{S}(\mathcal{S}(b)\;c)=\mathcal{S}(a\;\mathcal{S}(b))\;c.
\end{equation}
For more details on bicommutative algebras, see \cite{KMNZ2024, bicom1, bicom2, bicom3}.
The general questions can be stated as follows:
\begin{itemize}
    \item For a given binary quadratic operad $\Var$ that admits a nonsymmetric version, give a general algorithm for defining an operator $\mathcal{S}$ from the defining identities of $\Var$ that allows to construct the mapping from $\As\circ\Var$ to $\As^{(\mathcal{S})}$ as follows:
\[
\As\circ\Var\rightarrow\As^{(\mathcal{S})},\;\;a\succ b=\mathcal{S}(a)b\;\;\textrm{and}\;\;a\prec b=a\mathcal{S}(b);
\]
    \item Is it always possible to embed an operad $\As\circ\Var$ into $\As^{(\mathcal{S})}$?
\end{itemize}
In the case $\Var=\Nov$, the operator $\mathcal{S}$ is a derivation, and the answer to the second question is positive. Also, if $\Var=\Zin$, then for $\mathcal{S}$ stands the Rota-Baxter operator, and the answer to the second question is also positive. If $\Var=\Bicom$, it is straightforward that the operator $\mathcal{S}$ satisfies the condition \eqref{bicomoperator}. The answer to the second question is open.

All the written observations make sense that the white Manin product of the associative operad with $\Var$ generalizes the existing theory of the nonsymmetric binary quadratic operads and with a known set of operators.

This question is natural from both algebraic and combinatorial points of view. Algebraically, it measures the extent to which the defining relations of $\mathrm{Var}$ are already visible before the symmetric group actions are imposed. Combinatorially, the existence of a nonsymmetric version often makes it possible to work with planar tree monomials, rewriting rules, and explicit linear bases, thereby providing concrete descriptions that are less accessible in the symmetric setting. Thus the passage to a nonsymmetric model is not merely a reformulation; it can also simplify the effective study of the operad \cite{BrD_Companion,DotsHij}. For background on nonsymmetric operads from a combinatorial point of view we also refer to \cite{Giraudo}.

\section{White Manin products of the operad As with Var}

We start with a brief recollection of the operadic language used below. Let $\Var$ be a multilinear variety of algebras, that is, a class of vector spaces equipped with multilinear operations, not necessarily binary, subject to a family of multilinear identities in the variables
\[
X=\{x_1,x_2,\ldots\}.
\]
The multilinear part of the free algebra $\Var\langle X\rangle$ naturally carries the structure of an operad, which we again denote by $\Var$. More precisely, for every $n\geq 1$, the space $\Var(n)$ consists of all multilinear elements of degree $n$ depending on the variables $x_1,\ldots,x_n$. The symmetric group $S_n$ acts on $\Var(n)$ by permutations of variables, and the operadic compositions are defined by substitutions followed by consecutive renumbering of variables. The unit of this operad is
\[
1_{\Var}=x_1\in \Var(1).
\]

For example, if $\Var=\As$ is the variety of associative algebras, then $\As(n)$ is spanned by the monomials
\[
x_{\sigma(1)}\cdots x_{\sigma(n)},\qquad \sigma\in S_n.
\]
For instance, the composition
\[
\gamma_{2,1,3}^6:
x_3x_2x_1\otimes
x_2x_1\otimes x_1\otimes x_1x_3x_2
\]
is given by
\[
x_3x_2x_1
\circ
(x_2x_1,\;x_1,\;x_1x_3x_2)
=
(x_4x_6x_5)x_3(x_2x_1)
=
x_4x_6x_5x_3x_2x_1.
\]

We also recall the standard definition of a binary quadratic operad. Such operads encode varieties of algebras with one or several bilinear operations satisfying multilinear identities of degree~$3$. We follow the notation of \cite{GinzburgKapranov}.

Let $V$ be a finite-dimensional vector space over a field $\Bbbk$ endowed with a linear action of the symmetric group $S_2$. Set
\[
\mathcal F_V(3)
=
\Ind_{S_2}^{S_3}(V\vecotimes V)
=
\Bbbk S_3\otimes_{\Bbbk S_2}(V\vecotimes V).
\]
If $R$ is an $S_3$-submodule of $\mathcal F_V(3)$, then we denote by
\[
\mathcal P=\mathcal P(V,R)
\]
the binary quadratic operad generated by $\mathcal P(2)=V$ with defining relations $R$. Thus
\[
\mathcal P(1)=\Bbbk,
\qquad
\mathcal P(3)=\mathcal F_V(3)/R.
\]

Let $\Var$ be a quadratic operad generated by two binary operations $e_1$ and $e_2$ satisfying
\[
e_2=(12)e_1.
\]
Then the white Manin product $\As\circ\Var$ naturally splits this $S_2$-symmetry and produces an operad with two pairs of binary operations. For the definition and computation of the white Manin product, we refer the reader to \cite{GinzburgKapranov}. This motivates the following definition.

\begin{definition}
Assume that the operad $\As\circ\Var$ is generated by four binary operations
\[
g_1,\qquad g_2=(12)g_1,\qquad h_1,\qquad h_2=(12)h_1.
\]
We say that $\As\circ\Var$ is a nonsymmetric version of $\Var$ if the original operad $\Var$ is recovered from $\As\circ\Var$ by the identification
\[
g_1=h_2\;\;\textrm{and}\;\;g_2=h_1
\]
Equivalently,
\[
\As\circ\Var \big/ \{\,g_1-h_2=0,\,g_2-h_1=0\}\cong \Var.
\]
In this case, we denote the nonsymmetric operad $\As\circ\Var$ by $\non\com\text{-}\Var$.
\end{definition}

Let us present several examples of nonsymmetric versions of operads $\Var$.

\begin{example}\label{ex1}
Consider $\Var=\Nov$, where $\Nov$ is the operad governed by the variety of Novikov algebras defined by the right-commutative and left-symmetric identities.

As given in \cite{KSO2019}, the operad $\As\circ\Nov$ is generated by a $4$-dimensional space with basis
\[
g_1,\qquad g_2=(12)g_1,\qquad h_1,\qquad h_2=(12)h_1,
\]
and by an $S_3$-submodule $R$ generated by the relations
\[
(13)(g_2\vecotimes h_2)-h_1\vecotimes g_1,
\]
\[
g_1\vecotimes h_1
-
(13)(g_2\vecotimes g_2)
-
(13)(h_2\vecotimes g_2)
+
h_1\vecotimes h_1.
\]
Equivalently, introducing binary operations $\succ$ and $\prec$ by
\[
x_1\succ x_2 \leftrightarrow g_1,
\qquad
x_1\prec x_2 \leftrightarrow h_1,
\]
these relations take the form
\[
x\succ (y\prec z) = (x\succ y)\prec z,
\]
\[
(x\prec y)\succ z - x\succ (y\succ z)
=
x\prec (y\succ z) - (x\prec y)\prec z.
\]
In \cite{erlagol2021}, the operad $\As\circ\Nov$ is called the operad of noncommutative Novikov algebras. Moreover, it was noted in \cite{DauSar,erlagol2021} that
\[
\non\com\text{-}\Nov \big/ \{\,g_1-h_2=0,\,g_2-h_1=0\}\cong \Nov.
\]
\end{example}

\begin{example}\label{ex2}
The operad $\Zin$ governed by the variety of Zinbiel algebras is defined by a $2$-dimensional space $V$
with a basis
\[
e_1,\qquad e_2=(12)e_1
\]
and by the $S_3$-submodule $R$ generated by
\begin{equation}\label{eq:Zin-operadic-correct}
(13)(e_2\vecotimes e_2)-e_1\vecotimes e_1-e_1\vecotimes e_2.
\end{equation}
Indeed, \eqref{eq:Zin-operadic-correct} is exactly the operadic form of the identity
\[
x_1(x_2x_3)=(x_1x_2)x_3+(x_2x_1)x_3.
\]

Similarly, the operad $\As\circ\Zin$ is defined by a $4$-dimensional space $W$
with a basis
\[
g_1,\qquad g_2=(12)g_1,\qquad h_1,\qquad h_2=(12)h_1
\]
and by the $S_3$-submodule $R$ generated by
\begin{equation}\label{eq:AsZin-rel-1-correct}
g_1\vecotimes g_1-(13)(g_2\vecotimes g_2)-(13)(g_2\vecotimes h_2),
\end{equation}
\begin{equation}\label{eq:AsZin-rel-2-correct}
g_1\vecotimes h_1-(13)(h_2\vecotimes g_2),
\end{equation}
\begin{equation}\label{eq:AsZin-rel-3-correct}
(13)(h_2\vecotimes h_2)-h_1\vecotimes g_1-h_1\vecotimes h_1.
\end{equation}
These relations correspond respectively to
\[
(x_1\succ x_2)\succ x_3=x_1\succ(x_2\succ x_3)+x_1\succ(x_2\prec x_3),
\]
\[
(x_1\prec x_2)\succ x_3=x_1\prec(x_2\succ x_3),
\]
\[
x_1\prec(x_2\prec x_3)=(x_1\succ x_2)\prec x_3+(x_1\prec x_2)\prec x_3.
\]

Now factor by the relations
\[
g_1-h_2=0\;\;\textrm{and}\;\;g_2-h_1=0.
\]
Denote by
\[
e_1=\overline{h}_1,\qquad e_2=\overline{g}_1=(12)e_1
\]
the images of the generators in the quotient. Then \eqref{eq:AsZin-rel-1-correct} and \eqref{eq:AsZin-rel-3-correct} become
\[
(13)(e_2\vecotimes e_2)-e_1\vecotimes e_2-e_1\vecotimes e_1,
\]
which is exactly the defining relation of $\Zin$. The relation \eqref{eq:AsZin-rel-2-correct} stands for left-commutative identity, which holds in $\Zin$. Hence
\[
(\As\circ\Zin)/\{g_1-h_2=0\}\cong \Zin.
\]
\end{example}

\begin{definition}
A dendriform algebra is a vector space $D$ together with bilinear operations
\[
\succeq,\preceq \colon D \times D \to D,
\]
such that the following identities hold for all $x,y,z \in D$:
\begin{align}
(x \succeq y + x \preceq y)\succeq z &= x \succeq (y \succeq z), \label{den1}\\
(x \succeq y)\preceq z &= x \succeq (y \preceq z). \label{den2}\\
(x \preceq y)\preceq z &= x \preceq (y \succeq z + y \preceq z), \label{den3}
\end{align} 

\end{definition}

\begin{remark}
Here, we obtain an observation that a nonsymmetric version of the Zinbiel operad coincides with the dendriform operad in the case
\[
a\succ b=a\preceq b\;\;\textrm{and}\;\;a\prec b=a\succeq b.
\]

Indeed, in many cases, one has
\[
\Zin\circ\Var=\pre\text{-}\Var.
\]
However, this is not true in general. In particular, if \(\Var\) is a right-nilpotent binary quadratic operad, then this property fails.
\end{remark}


\begin{example}\label{ex3}
The operad $\Bicom$ governed by the variety of bicommutative algebras is defined by a $2$-dimensional space $V$
with a basis $e_1$, $e_2=(12)e_1$ and by
the $S_3$-submodule $R$ generated by
\begin{equation}\label{eq:OperadicBicom-e1e2}
(13)(e_1\vecotimes e_2)-e_1\vecotimes e_2
\end{equation}
(the right commutativity),
\begin{equation}\label{eq:OperadicBicom-e2e1}
(13)(e_2\vecotimes e_1)-e_2\vecotimes e_1
\end{equation}
(the left commutativity).

The operad $\As\circ\Bicom$ is defined by a $4$-dimensional space $W$
with a basis
\[
g_1,\quad g_2=(12)g_1,\quad h_1,\quad h_2=(12)h_1
\]
and by the $S_3$-submodule $R$ generated by
\begin{equation}\label{eq:OperadicAsBicom-g2h2}
(13)(g_2\vecotimes h_2)-h_1\vecotimes g_1
\end{equation}
(the identity
$x_1\succ (x_2\prec x_3)=(x_1\succ x_2)\prec x_3$),
\begin{equation}\label{eq:OperadicAsBicom-h2g2}
(13)(h_2\vecotimes g_2)-g_1\vecotimes h_1
\end{equation}
(the identity
$x_1\prec (x_2\succ x_3)=(x_1\prec x_2)\succ x_3$).

Factor $\As\circ\Bicom$ by the relations
\[
g_1-h_2=0\;\;\textrm{and}\;\;g_2-h_1=0.
\]
Denote by $e_1=\overline{h}_1$ and $e_2=\overline{g}_1=(12)e_1$
the images of the generators in the quotient. Then
\eqref{eq:OperadicAsBicom-g2h2} turns into
\[
(13)(e_1\vecotimes e_2)-e_1\vecotimes e_2,
\]
and \eqref{eq:OperadicAsBicom-h2g2} turns into
\[
(13)(e_2\vecotimes e_1)-e_2\vecotimes e_1,
\]
which are exactly relations
\eqref{eq:OperadicBicom-e1e2} and
\eqref{eq:OperadicBicom-e2e1}.
Hence
\[
(\As\circ\Bicom)/\{g_1-h_2=0,\,g_2-h_1=0\}\cong \Bicom.
\]
\end{example}

However, not every binary quadratic operad admits a nonsymmetric version. The following result illustrates this.

\begin{lemma}\label{NoAlt}
The operad $\Alt$ does not admit a nonsymmetric version, where $\Alt$
corresponds to the variety of alternative algebras \cite{AKK2026, binaryperm}. More precisely,
\[
\As\circ\Alt \big/ \{\,g_1-h_2=0,\,g_2-h_1=0\}\not\cong \Alt.
\]
\end{lemma}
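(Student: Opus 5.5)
The plan is to compare the dimensions of the arity-three components. Both $\Alt$ and the quotient $\mathcal Q=\As\circ\Alt/\{g_1-h_2,\,g_2-h_1\}$ are binary quadratic operads with a $2$-dimensional regular $S_2$-module of generators. So it suffices to show that $\dim\mathcal Q(3)\neq\dim\Alt(3)$.

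First, recall $\dim\Alt(3)$. The space $\mathcal F_V(3)$ has dimension $12$. The relations of $\Alt$ say that the associator is skew-symmetric. They therefore span the $5$-dimensional complement of the sign representation inside the $6$-dimensional span of the associators $(x_{\sigma1},x_{\sigma2},x_{\sigma3})$. Hence $\dim\Alt(3)=7$.

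Next, compute the relations of $\As\circ\Alt$ explicitly, as in \cite{GinzburgKapranov}. Realize $\As\circ\Alt$ as the suboperad of the Hadamard product $\As\otimes\Alt$ generated by
\[
g_1=x_1x_2\otimes x_1x_2,\qquad h_1=x_1x_2\otimes x_2x_1,
\]
so that $x\succ y$ and $x\prec y$ are written in the associative word $xy$. A tree monomial in $\succ,\prec$ on $x_{\sigma1},x_{\sigma2},x_{\sigma3}$ has image $x_{\sigma1}x_{\sigma2}x_{\sigma3}\otimes w$, where $w$ is an $\Alt$-monomial. Since the associative words are linearly independent, the kernel $R$ splits by the order $\sigma$. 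Fix $\sigma=\mathrm{id}$ and let $M$ be the $8$-dimensional span of the $\Alt$-monomials reachable from the $8$ tree monomials. Then $R_{\mathrm{id}}$ is the intersection of $M$ with the $5$-dimensional relation space of $\Alt$. I will compute this intersection directly by writing each of the five associator relations in the monomial basis and solving the resulting linear system. I expect it to be small, roughly $1$ or $2$ dimensional. The resulting identities will be partial alternativity relations written with $\succ,\prec$, for instance $(x\succ y)\succ z-x\succ(y\succ z)=$ a combination of $\prec$-terms. The full relation module $R$ is the $S_3$-module generated by $R_{\mathrm{id}}$.

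Then impose $g_1=h_2$ and $g_2=h_1$, i.e.\ $x\succ y=y\prec x$, and push the generators of $R$ into $\mathcal F_V(3)$ for $V=\langle e_1,e_2\rangle$, as in Examples~\ref{ex2} and~\ref{ex3}. This yields an explicit $S_3$-submodule $\bar R\subseteq\mathcal F_V(3)$ with $\mathcal Q(3)=\mathcal F_V(3)/\bar R$. The claim then reduces to checking that $\bar R$ differs from the alternative relation module, i.e.\ $\dim\bar R\neq5$.

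I expect the main obstacle to be the computation of $R_{\mathrm{id}}$ and of the $S_3$-orbit of its image. The alternativity relations never fix an associative order, so only their combinations whose terms all fall into one word survive. One must check carefully that the surviving relations, after the identification, do not regenerate the full $5$-dimensional alternative module. My expectation is that they span a strictly smaller module, so that $\mathcal Q$ is strictly larger than $\Alt$ and $\dim\mathcal Q(3)>7$. Alternatively, I will exhibit one explicit alternative identity, e.g.\ left alternativity $(x,x,y)=0$ linearized, that is not in $\bar R$. I will verify this by a direct rank computation over the $12$ monomials of $\mathcal F_V(3)$.
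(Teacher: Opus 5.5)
Your plan follows the paper's proof. You determine the arity-three relation of $\As\circ\Alt$, push it through the identification $x\succ y=y\prec x$, and compare the result with $\dim\Alt(3)=7$. Your explanation of how to get the relations of $\As\circ\Alt$ is more than the paper writes, since the paper simply states the relation. You split by associative word, so that the $\sigma=\mathrm{id}$ component is $M\cap R_{\Alt}$, where $M$ is spanned by the eight monomials of $\mathcal F_V(3)$ whose outer variable is $x_1$ or $x_3$. Your $g_1$ is the paper's $h_1$. This is harmless, because the identification $g_1=h_2$, $g_2=h_1$ is symmetric under swapping $g$ and $h$. In either convention the identification just forgets the $\As$ factor.

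What is missing is the computation itself, and that computation is the whole content of the lemma. You leave $\dim R_{\mathrm{id}}$ at ``roughly $1$ or $2$'' and the final inequality as an expectation. It is short:
\begin{itemize}
\item Put $A_\sigma=(x_{\sigma1},x_{\sigma2},x_{\sigma3})$. The six associators have pairwise disjoint supports covering all $12$ monomials, and $R_{\Alt}=\{\sum_\sigma c_\sigma A_\sigma : \sum_\sigma \mathrm{sgn}(\sigma)c_\sigma=0\}$.
\item The two monomials of $A_\sigma$ have outer variables $x_{\sigma1}$ and $x_{\sigma3}$. So $\sum_\sigma c_\sigma A_\sigma\in M$ forces $c_\sigma=0$ unless $\sigma(2)=2$, and the sign condition then gives $c_{\mathrm{id}}=c_{(13)}$.
\item Hence $R_{\mathrm{id}}$ is one-dimensional, spanned by $A_{\mathrm{id}}+A_{(13)}=(x_1x_2)x_3-x_1(x_2x_3)+(x_3x_2)x_1-x_3(x_2x_1)$. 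This is the paper's relation up to sign, and after the identification it is the flexible identity.
\item Its $S_3$-span $\bar R$ is spanned by the three elements $A_\pi+A_{\pi(13)}$, which have disjoint supports. So $\dim\bar R=3$ and $\dim\mathcal Q(3)=9\neq 7$.
\end{itemize}

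Your fallback, exhibiting one alternative identity outside $\bar R$, only shows $\bar R\neq R_{\Alt}$. That alone does not rule out an isomorphism induced by a different choice of generators. It works here only because $\bar R\subseteq R_{\Alt}$: we have $R_{\mathrm{id}}\subseteq R_{\Alt}$, and the identification is the projection onto the $\Alt$ factor. Then $\bar R\neq R_{\Alt}$ forces $\dim\bar R<5$. If you take that route, state this inclusion explicitly.
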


\begin{proof}
The operad $\As\circ\Alt$ is defined by a $4$-dimensional space $W$
with a basis
\[
g_1,\quad g_2=(12)g_1,\quad h_1,\quad h_2=(12)h_1
\]
and by the $S_3$-submodule $R$ generated by
\begin{equation}\label{eq:OperadicAsAlt-g1h1}
g_1\vecotimes g_1
-(13)(g_2\vecotimes g_2)
-h_1\vecotimes h_1
+(13)(h_2\vecotimes h_2)
\end{equation}
(the identity
\[
(x_1 \succ x_2)\succ x_3 - x_1 \succ (x_2 \succ x_3)
-
(x_1 \prec x_2)\prec x_3 + x_1 \prec (x_2 \prec x_3)=0).
\]

Now factor $\As\circ\Alt$ by the relations
\[
g_1-h_2=0\;\;\textrm{and}\;\;g_2-h_1=0.
\]
Denote by
\[
e_1=\overline{h}_1,\qquad e_2=\overline{g}_1=(12)e_1
\]
the images of the generators in the quotient. Then relation
\eqref{eq:OperadicAsAlt-g1h1} turns into
\begin{equation}\label{eq:OperadicFlex-e1e2}
e_2\vecotimes e_2
-(13)(e_1\vecotimes e_1)
-e_1\vecotimes e_1
+(13)(e_2\vecotimes e_2).
\end{equation}

Let us rewrite \eqref{eq:OperadicFlex-e1e2} in terms of the binary operation
$e_1=x_1x_2$, $e_2=(12)e_1=x_2x_1$. We have
\[
e_2\vecotimes e_2=x_3(x_2x_1),\qquad
(13)(e_1\vecotimes e_1)=(x_3x_2)x_1,
\]
\[
e_1\vecotimes e_1=(x_1x_2)x_3,\qquad
(13)(e_2\vecotimes e_2)=x_1(x_2x_3).
\]
Hence \eqref{eq:OperadicFlex-e1e2} is exactly
\[
x_3(x_2x_1)-(x_3x_2)x_1-(x_1x_2)x_3+x_1(x_2x_3)=0,
\]
or, equivalently,
\[
(x_1x_2)x_3-x_1(x_2x_3)=-(x_3x_2)x_1+x_3(x_2x_1).
\]
Thus the quotient operad is defined by the identity
\[
(a,b,c)=-(c,b,a),
\]
where $(a,b,c)$ is the associator. This is precisely the flexible identity.

Therefore,
\[
(\As\circ\Alt)/\{\,g_1-h_2=0\,\}\cong\Flex,
\]
where $\Flex$ denotes the operad of flexible algebras.

It remains to note that $\Flex\not\cong \Alt$. It follows from
\[
\dim(\Alt(3))=7\;\;\textrm{and}\;\;\dim(\anti\textrm{-}\Flex(3))=9.
\]
\end{proof}

\begin{remark}\label{NoAssosym}
Similarly, one can prove that the assosymmetric operad does not admit a nonsymmetric version.  For more details on assosymmetric algebras, see, for example, \cite{Kleinfeld, MK2022}.
\end{remark}

The following result provides a criterion for determining whether a binary quadratic operad admits a nonsymmetric version.

\begin{theorem}\label{criterion}
Let $\mathcal P=\mathcal P(V,R)$ be a binary quadratic operad, and let
$\mathcal F_V(3)=\Ind_{S_2}^{S_3}(V\vecotimes V)$.
Denote by $F$ the $S_3$-submodule of $\mathcal F_V(3)$ generated by all elements from
\[
(\Bbbk\,\mathrm{id}+\Bbbk(13))(V\vecotimes V)
\]
whose images are zero in $\mathcal P(3)=\mathcal F_V(3)/R$.
Equivalently,
\[
F=\left\langle\,(\Bbbk\,\mathrm{id}+\Bbbk(13))(V\vecotimes V)\cap R\,\right\rangle_{S_3}.
\]
Then the operad $\mathcal P$ admits a nonsymmetric version if and only if
\begin{equation}\label{EqDim}
\dim (\mathcal P(V,R)(3))=\dim (\mathcal P(V,F)(3)).    
\end{equation}
\end{theorem}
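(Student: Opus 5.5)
Since $F\subseteq R$, the identity map of $\mathcal F_V(3)$ induces a surjection $\mathcal P(V,F)(3)\to\mathcal P(V,R)(3)$. Hence \eqref{EqDim} holds if and only if $F=R$, that is, if and only if $R$ is generated as an $S_3$-module by relations lying in the subspace $U=(\Bbbk\,\mathrm{id}+\Bbbk(13))(V\vecotimes V)$. The plan is therefore to show that ``$\mathcal P$ admits a nonsymmetric version'' is equivalent to ``$R=F$''. The proof will work with the explicit shape of $\As\circ\mathcal P$, so the first task is to describe the white Manin product in arity $3$.

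\emph{Step 1.} In $\As(3)$ every monomial $x_{\sigma(1)}x_{\sigma(2)}x_{\sigma(3)}$ is a single basis element, so both tree shapes coincide up to the $S_3$-action. The subspace $U$ is the part of $\mathcal F_V(3)$ consisting of the two ``planar'' shapes in which the variables keep the order $x_1,x_2,x_3$ (reading the $(13)$-twisted tree backwards). For $\As\circ\mathcal P$, each operation $e\in V$ splits into an oriented pair: $g$ pairs with $x_1x_2$ and $h$ pairs with $x_2x_1$. I would then show that the relations of $\As\circ\mathcal P$ come in two kinds:
\begin{itemize}
\item planar relations $\tilde r$, which are obtained from elements $r\in R\cap U$ by replacing the $e$'s with the appropriate $g$'s and $h$'s, in the way \eqref{eq:AsZin-rel-1-correct}--\eqref{eq:AsZin-rel-3-correct} arise from \eqref{eq:Zin-operadic-correct};
\item further relations forced by the associative side. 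These identify monomials that do not preserve the order of $x_1,x_2,x_3$.
\end{itemize}
I expect that, after these identifications, $\As\circ\mathcal P(3)$ is spanned by planar monomials modulo the lifts $\widetilde{R\cap U}$. The verification amounts to writing $\As\circ\mathcal P(3)=\As(3)\otimes\mathcal P(3)$ restricted to the image of $\mathcal F$, and checking it against Examples \ref{ex1}--\ref{ex3} and Lemma \ref{NoAlt}.

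\emph{Step 2.} Apply the identification $g_1=h_2$, $g_2=h_1$. This sends each lift $\tilde r$ back to $r$; this is exactly the computation carried out for $\Zin$, $\Bicom$ and $\Alt$ above. Consequently
\[
\As\circ\mathcal P\big/\{g_1-h_2,\,g_2-h_1\}\cong \mathcal P(V,F).
\]
Here one must check that the images of all $S_3$-translates of $\tilde r$ generate precisely $\langle R\cap U\rangle_{S_3}=F$, with no extra relations. Then $\mathcal P$ admits a nonsymmetric version if and only if $\mathcal P(V,F)\cong\mathcal P(V,R)$. Because of the surjection noted at the start, and because a binary quadratic operad is determined by its arity-$3$ component, this holds if and only if \eqref{EqDim} holds. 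In the direction ``isomorphic $\Rightarrow$ equal dimensions'', one uses that any isomorphism preserves $\dim(3)$, and the canonical surjection then forces $F=R$.

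\emph{Main obstacle.} The delicate point is Step 1: proving that $\As\circ\mathcal P$ is presented exactly by the planar lifts of $R\cap U$, with nothing more and nothing less. I would first try a dimension count in arity $3$. The white product is the suboperad of $\As\otimes\mathcal P$ generated by $\As(2)\otimes\mathcal P(2)$, and its relations are the kernel of $\mathcal F_{\As(2)\otimes V}(3)\to\As(3)\otimes\mathcal P(3)$. Since $\As(3)\cong\Bbbk S_3$ is free, an element lies in this kernel exactly when, for each fixed ordering of the variables, the corresponding $\mathcal P$-component vanishes. Each such ordering component is a combination of the two tree shapes compatible with that order, i.e.\ an $S_3$-translate of an element of $U$. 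This identifies the kernel with the lifts of $\langle R\cap U\rangle$, which is what Step 2 needs.
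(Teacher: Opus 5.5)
Your proposal is correct and follows the paper's route: identify $(\As\circ\mathcal P)/\{g_1-h_2,\,g_2-h_1\}$ with $\mathcal P(V,F)$, then use $F\subseteq R$ to reduce \eqref{EqDim} to $F=R$. Your decomposition of the kernel of $\mathcal F_{\As(2)\otimes V}(3)\to\As(3)\otimes\mathcal P(3)$ by orderings, using that $\As(3)$ is free, proves this identification unconditionally, whereas the paper only sketches it inside the forward direction; the decomposition also shows there is no genuinely separate ``second kind'' of relation in your Step~1, since every relation is an $S_3$-translate of a planar lift.
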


Practically, the condition of the theorem means that from the space of relations $R$ one derives all consequences of the form

\begin{equation}\label{TwoOutside}
\sum\limits_{k,l\in I}
\alpha^{i,j}_{k,l} (x_3\circ_k x_2)\circ_l x_1
+
\beta^{i,j}_{k,l} (x_1\circ_k x_2)\circ_l x_3
=0,
\end{equation}
for some coefficients $\alpha^{i,j}_{k,l},\beta^{i,j}_{k,l}\in\Bbbk$.
We collect all such consequences in the set $F$.

\begin{proof}
The operad $\As\circ\mathcal{P}$ is generated by the $4$-dimensional space $(\As\circ\mathcal{P})(2)$ spanned by
\[
\begin{aligned}
x_1\prec x_2 &= x_1x_2 \otimes (x_1\circ x_2), \qquad
x_2\prec x_1 = x_2x_1 \otimes (x_2\circ x_1),\\
x_1\succ x_2 &= x_1x_2 \otimes (x_2\circ x_1), \qquad
x_2\succ x_1 = x_2x_1 \otimes (x_1\circ x_2).
\end{aligned}
\]

Since the factor $\As$ is generated by a nonsymmetric binary operation, in order to compute $\As\circ\mathcal P$ it is enough to consider the intersection of the $S_3$-submodule of $M_3(X)\otimes M_3(X)$ generated by
\[
\begin{gathered}
(x_1\prec x_2)\prec x_3=(x_1x_2)x_3\otimes (x_1\circ x_2)\circ x_3,\qquad
x_1\succ (x_2\succ x_3)=x_1(x_2x_3)\otimes (x_3\circ x_2)\circ x_1,\\
(x_1\succ x_2)\prec x_3=(x_1x_2)x_3\otimes (x_2\circ x_1)\circ x_3,\qquad
x_1\succ (x_2\prec x_3)=x_1(x_2x_3)\otimes (x_2\circ x_3)\circ x_1,\\
(x_1\prec x_2)\succ x_3=(x_1x_2)x_3\otimes x_3\circ (x_1\circ x_2),\qquad
x_1\prec (x_2\succ x_3)=x_1(x_2x_3)\otimes x_1\circ (x_3\circ x_2),\\
x_1\prec (x_2\prec x_3)=x_1(x_2x_3)\otimes x_1\circ (x_2\circ x_3),\qquad
(x_1\succ x_2)\succ x_3=(x_1x_2)x_3\otimes x_3\circ (x_2\circ x_1)
\end{gathered}
\]
with the kernel of the natural projection
\[
M_3(X)\otimes M_3(X)\to \As(3)\otimes \mathcal{P}(3).
\]

Suppose that the operad $\mathcal P$ admits a nonsymmetric version. Then, by definition,
\[
(\As\circ\mathcal P)\big/\{\,x_1\succ x_2-x_2\prec x_1=0\,\}\cong \mathcal P.
\]
Hence, every degree-$3$ monomial in the generators of $\As\circ\mathcal P$ reduces to a linear combination of monomials of the form
\[
(x_{i_1}\prec x_{i_2})\prec x_{i_3},\qquad x_{i_1}\prec(x_{i_2}\prec x_{i_3}),
\]
that is, to elements of
\[
(\Bbbk\,\mathrm{id}+\Bbbk(13))(V\vecotimes V)\subseteq \mathcal F_V(3).
\]
Therefore, after factorization by the relation \(x_1\succ x_2-x_2\prec x_1=0\), all defining relations of \(\As\circ\mathcal P\) become consequences of \(R\) of the form
\[
\sum\limits_{k,l\in I}
\alpha^{i,j}_{k,l}(x_3\circ_k x_2)\circ_l x_1
+
\beta^{i,j}_{k,l}(x_1\circ_k x_2)\circ_l x_3=0.
\]
By definition, the $S_3$-submodule generated by all such consequences is precisely \(F\). Thus,
\[
(\As\circ\mathcal P)\big/\{\,x_1\succ x_2-x_2\prec x_1=0\,\}\cong \mathcal P(V,F).
\]
Since, by assumption, this quotient is isomorphic to \(\mathcal P=\mathcal P(V,R)\), we obtain
\[
\mathcal P(V,F)\cong \mathcal P(V,R).
\]
In particular,
\[
\dim (\mathcal P(V,R)(3))=\dim (\mathcal P(V,F)(3)).
\]

Conversely, suppose that
\[
\dim(\mathcal P(V,R)(3))=\dim(\mathcal P(V,F)(3)).
\]
Since \(F\subseteq R\), we have
\[
\dim(\mathcal P(V,R)(3))=\dim( \mathcal F_V(3))-\dim R,
\qquad
\dim (\mathcal P(V,F)(3))=\dim (\mathcal F_V(3))-\dim F.
\]
Hence \(\dim R=\dim F\). Together with the inclusion \(F\subseteq R\), this implies
\[
F=R.
\]
Therefore,
\[
(\As\circ\mathcal P)\big/\{\,x_1\succ x_2-x_2\prec x_1=0\,\}\cong \mathcal P(V,F)=\mathcal P(V,R)=\mathcal P.
\]\end{proof}

\begin{remark}
The operads considered in Examples~\ref{ex2} and~\ref{ex3} satisfy the condition of Theorem~\ref{criterion}. Indeed, the defining identities of $\Zin$ and $\Bicom$ are already written in the form \eqref{TwoOutside}. Namely, for $\Zin$ one has
\[
x_1(x_2x_3)-(x_1x_2)x_3+(x_2x_1)x_3=0,
\]
and for $\Bicom$ one has
\[
(x_2x_1)x_3-(x_2x_3)x_1=0,
\qquad
x_1(x_3x_2)-x_3(x_1x_2)=0.
\]
\end{remark}

\begin{example}\label{NovAndLS}
Let $\Pre\text{-}\Lie$ be the operad defined by the left-symmetric identity
\[
(x_1x_2)x_3-x_1(x_2x_3)-(x_2x_1)x_3+x_2(x_1x_3)=0.
\]
By Theorem~\ref{criterion}, the operad $\Pre\text{-}\Lie$ does not admit a nonsymmetric version, since its defining identity cannot be written in the form \eqref{TwoOutside}.

On the other hand, the defining identities of the operad $\Nov$ may be written as
\[
(x_1x_2)x_3-x_1(x_3x_2)-(x_3x_2)x_1+x_3(x_1x_2)=0
\]
and
\[
(x_2x_1)x_3-(x_2x_3)x_1=0.
\]
Hence, they are of the form \eqref{TwoOutside}, and therefore $\Nov$ admits a nonsymmetric version.
\end{example} 

\begin{lemma}
Consider $\Var=\Leib$ defined by the identity
\[
f:=(x_1x_2)x_3-x_1(x_2x_3)+x_2(x_1x_3)=0.
\]
The $\Leib$ does not admit a nonsymmetric version.
\end{lemma}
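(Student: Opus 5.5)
We apply Theorem~\ref{criterion}. Since $\dim\Leib(3)=6$ and $\dim\mathcal F_V(3)=12$, the relation module $R$ has dimension $6$. It is spanned by the six permuted identities $f^\sigma$, $\sigma\in S_3$, where
\[
f^\sigma=(x_{\sigma1}x_{\sigma2})x_{\sigma3}-x_{\sigma1}(x_{\sigma2}x_{\sigma3})+x_{\sigma2}(x_{\sigma1}x_{\sigma3}).
\]
These six elements are independent, since each contains its own left-normed monomial. It therefore suffices to show that the module $F$ of the theorem has dimension strictly less than $6$.

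\emph{Identifying the subspace.} The subspace $(\Bbbk\,\mathrm{id}+\Bbbk(13))(V\vecotimes V)$ is spanned by the $8$ monomials whose outer variable (the one not inside the inner product) is $x_1$ or $x_3$. Equivalently, it excludes exactly the four monomials $(x_1x_3)x_2$, $(x_3x_1)x_2$, $x_2(x_1x_3)$, $x_2(x_3x_1)$.

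\emph{Computing the intersection with $R$.} Take a general element $\sum c_\sigma f^\sigma\in R$.
\begin{enumerate}[(i)]
\item The monomial $(x_ax_b)x_c$ occurs only in $f^\sigma$ with $\sigma=(a,b,c)$. Hence $c_\sigma=0$ whenever $\sigma3=2$, which leaves $\sigma\in\{123,213,231,321\}$.
\item The monomial $x_2(x_1x_3)$ occurs with coefficient $+1$ in $f^{123}$ and $-1$ in $f^{213}$, so $c_{123}=c_{213}$.
\item Similarly, $x_2(x_3x_1)$ forces $c_{231}=c_{321}$.
\end{enumerate}
Thus the intersection is two-dimensional, spanned by $f^{123}+f^{213}$ and $f^{231}+f^{321}$. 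The right-normed terms cancel, giving
\[
f^{123}+f^{213}=(x_1x_2)x_3+(x_2x_1)x_3,
\]
and $f^{231}+f^{321}=(x_2x_3)x_1+(x_3x_2)x_1$.

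\emph{Dimension count.} The module $F$ is therefore the $S_3$-module generated by the identity $(xy+yx)z=0$. It is spanned by the three elements $(x_ax_b+x_bx_a)x_c$, one for each choice of $c$. These are linearly independent because their monomials are disjoint. Hence $\dim F=3$ and
\[
\dim\mathcal P(V,F)(3)=12-3=9\neq 6=\dim\Leib(3).
\]
By Theorem~\ref{criterion}, $\Leib$ admits no nonsymmetric version.

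The only delicate step is the bookkeeping in (i)--(iii): identifying exactly which monomials lie outside $(\Bbbk\,\mathrm{id}+\Bbbk(13))(V\vecotimes V)$, and tracking the signs of the right-normed terms $x_2(\cdot)$. This should be double-checked against the convention for $(13)$ used in the examples above.
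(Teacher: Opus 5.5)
Your proof is correct and follows the paper's argument. You identify $(\Bbbk\,\mathrm{id}+\Bbbk(13))(V\vecotimes V)$ as the span of the monomials with outer variable $x_1$ or $x_3$, which is consistent with the paper's convention $(13)(e_2\vecotimes e_2)=x_1(x_2x_3)$; you show the intersection with $R$ is spanned by $f^{123}+f^{213}$ and $f^{231}+f^{321}$, which are the paper's $r_1+r_2$ and $r_3+r_5$; and you conclude via Theorem~\ref{criterion}. Your bookkeeping in (i)--(iii) and your explicit count $\dim\mathcal P(V,F)(3)=9\neq 6$ make precise the steps the paper calls ``direct calculations'' and ``straightforward''.
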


\begin{proof}
Let $V=\Bbbk e_1\oplus \Bbbk e_2$, where $e_2=(12)e_1$, and let $e_1=x_1x_2,\;e_2=x_2x_1$.
Then the operad $\Leib=\mathcal P(V,R)$ is defined by the $S_3$-submodule $R$ generated by
\[
r=e_1\vecotimes e_1-(13)(e_2\vecotimes e_2)+(23)(e_2\vecotimes e_1).
\]
Set
\[
U_0=V\vecotimes V,\qquad U_1=(13)(V\vecotimes V),\qquad U_2=(23)(V\vecotimes V).
\]
Then
\[
\mathcal F_V(3)=U_0\oplus U_1\oplus U_2\;\;\textrm{and}\;\;
(\Bbbk\,\mathrm{id}+\Bbbk(13))(V\vecotimes V)=U_0\oplus U_1.
\]

Consider the $S_3$-orbit of \(r\):
\[
\begin{aligned}
r_1=(x_1x_2)x_3-x_1(x_2x_3)+x_2(x_1x_3),\;
r_2=(x_2x_1)x_3-x_2(x_1x_3)+x_1(x_2x_3),\\
r_3=(x_3x_2)x_1-x_3(x_2x_1)+x_2(x_3x_1),\;
r_4=(x_1x_3)x_2-x_1(x_3x_2)+x_3(x_1x_2),\\
r_5=(x_2x_3)x_1-x_2(x_3x_1)+x_3(x_2x_1),\;
r_6=(x_3x_1)x_2-x_3(x_1x_2)+x_1(x_3x_2).
\end{aligned}
\]
Direct calculations show that
\[
R\cap (U_0\oplus U_1)=\left\langle r_1+r_2,\ r_3+r_5\right\rangle .
\]
Indeed,
\[
r_1+r_2=(x_1x_2)x_3+(x_2x_1)x_3\;\;\textrm{and}\;\;r_3+r_5=(x_2x_3)x_1+(x_3x_2)x_1,
\]
and their $(23)$-image is
\[
(x_1x_3)x_2+(x_3x_1)x_2.
\]
Hence the space \(F\) from Theorem~\ref{criterion} is the $S_3$-submodule generated by
\[
(x_1x_2)x_3+(x_2x_1)x_3,\qquad
(x_1x_3)x_2+(x_3x_1)x_2,\qquad
(x_2x_3)x_1+(x_3x_2)x_1.
\]
It is straightforward that
\[
\dim (\mathcal P(V,F)(3))>\dim (\mathcal P(V,R)(3)).
\]
By Theorem~\ref{criterion}, the operad \(\Leib\) does not admit a nonsymmetric version.
\end{proof}

\begin{remark}
As shown in Lemma~\ref{NoAlt} and Remark~\ref{NoAssosym}, the operads $\Alt$ and $\Assosym$ do not admit nonsymmetric versions. However, if one replaces them by their generalizations, namely the flexible and anti-flexible operads, then, by Theorem~\ref{criterion} and the fact that their defining identities are of the form \eqref{TwoOutside}, these operads do admit nonsymmetric versions. 
\end{remark}


In the next sections, we consider several well-known operads that admit nonsymmetric versions. For each of these nonsymmetric versions, we construct a linear basis and describe its connection with certain combinatorial objects.

\section{Nonsymmetric Zinbiel operad}

The defining identities of the nonsymmetric version of the Zinbiel operad are given in Example \ref{ex2}.

All results of this section were proved in \cite{Loday}. Nevertheless, for the convenience of the reader, we include alternative proofs that highlight the combinatorial nature of nonsymmetric operads.

\begin{lemma}\label{GrobnerBasisZin}
For the operad $\non\com$-$\Zin$, the following set of rewriting rules forms a Gr\"obner basis:
\begin{equation}\label{GrobnerZin1}
x(*\, y(*\, *)) \;\rightarrow\; y(x(*\, *)\, *),    
\end{equation}
\begin{equation}\label{GrobnerZin2}
x(*\, x(*\, *)) \;\rightarrow\; x(y(*\, *)\, *) + x(x(*\, *)\, *),
\end{equation}
and
\begin{equation}\label{GrobnerZin3}
y(*\, y(*\, *)) \;\rightarrow\; -\,y(*\, x(*\, *)) + y(y(*\, *)\, *),
\end{equation}
where $x(*\; *)$ and $y(*\; *)$ stand for $x_1\prec x_2$ and $x_1\succ x_2$, respectively.
\end{lemma}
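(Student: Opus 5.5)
We would use the standard Gröbner basis machinery for nonsymmetric operads, namely shuffle operads with trivial symmetric part, i.e. the diamond lemma for planar tree monomials. The first step is to fix an admissible monomial order on planar binary trees with internal vertices labelled by $x={\prec}$ and $y={\succ}$ for which the left-hand sides of \eqref{GrobnerZin1}--\eqref{GrobnerZin3} are the leading terms. A natural choice is the following:
\begin{itemize}
\item first compare the total number of internal vertices lying in right subtrees (a ``right-comb weight''), so that $\alpha(*\,\beta(*\,*))$ is larger than $\gamma(\delta(*\,*)\,*)$;
\item then compare by path-lexicographic order on vertex labels with $x>y$.
\end{itemize}
The tie between $y(*\,y(*\,*))$ and $y(*\,x(*\,*))$ in \eqref{GrobnerZin3} is broken by the labels. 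We must check that this order is compatible with operadic composition. This is routine, since both criteria are additive or lexicographic under grafting.

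We then check that the three rules generate the relations of $\non\com$-$\Zin$. They are exactly the three defining identities of Example~\ref{ex2}, each solved for its leading term. Hence the leading monomials of degree $3$ are the trees $\alpha(*\,\beta(*\,*))$ with $(\alpha,\beta)\in\{(x,y),(x,x),(y,y)\}$.

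By the diamond lemma, the rules form a Gröbner basis iff every S-polynomial (ambiguity) of degree $4$ reduces to zero. Since all leading terms are right-combs of length two, the only small ambiguities are the right-combs
\[
\alpha(*\,\beta(*\,\gamma(*\,*)))
\]
in which both $(\alpha,\beta)$ and $(\beta,\gamma)$ are leading pairs. These are the four words $xyy$, $xxy$, $xxx$ and $yyy$. Leading terms in disjoint positions give trivially resolvable ambiguities, and these are the only overlaps. For each of the four words, we reduce the tree in the two possible ways (first rewriting the top pair, or first the bottom pair) and keep applying the rules until we reach normal forms. We then verify that the two results coincide. This is the main computational obstacle, especially $xxx$ and $yyy$, where \eqref{GrobnerZin2} and \eqref{GrobnerZin3} produce sums that branch further.

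As a cross-check, which can also replace the overlap computation, we would count normal monomials. These are trees in which every internal right child is $x(\cdot)$ attached to a $y$-parent, and whose further right descendants obey the same restriction. We would show that the number of such trees of arity $n$ equals $\dim\Dend(n)$, the number of planar binary trees with $n+1$ leaves \cite{Loday}. Normal monomials always span, since rewriting terminates under the chosen order. Equality of this count with $\dim\Dend(n)$ then forces linear independence, which is exactly the Gröbner property. If the direct overlap computation becomes messy, we would prove this bijection by a recursion on the left-comb decomposition of a tree.
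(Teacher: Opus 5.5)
Your strategy is the paper's: fix an admissible order in which the three left-hand sides are the leading terms, then resolve the overlaps. Your list $xxx$, $xxy$, $xyy$, $yyy$ is exactly the paper's, and all four do resolve.

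The step that fails as written is your explicit order. The paper avoids this by only asserting that a suitable order exists. There are two problems.

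\emph{The tie-break goes the wrong way.} The monomials $y(*\,y(*\,*))$ and $y(*\,x(*\,*))$ differ only in the label of the right child. So any lexicographic comparison with $x>y$ makes $y(*\,x(*\,*))$ the leading term of \eqref{GrobnerZin3}. Your degree-$3$ leading pairs would then be $(x,y),(x,x),(y,x)$, contradicting your own next sentence. You need $y>x$.

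\emph{The weight is not compatible with grafting.} Read it as the total, over all vertices $v$, of the number of internal vertices in the right subtree of $v$. Inserting a tree $S$ with $k$ internal vertices at leaf $i$ of $T$ adds $w(S)+k\,r_T(i)$. Here $r_T(i)$ is the number of right turns on the path from the root to leaf $i$, and it depends on the shape of $T$. For example, $B=\alpha(\beta(*\,\gamma(*\,*))\,*)$ and $C=\alpha(\beta(*\,*)\,\gamma(*\,*))$ both have weight $1$. Their right-turn vectors are $(0,1,2,1)$ and $(0,1,1,2)$. Grafting at leaf $3$ favours $B$ and grafting at leaf $4$ favours $C$, so no tie-break makes the order admissible. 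Without admissibility, neither the diamond lemma nor your claim that normal monomials span is available.

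The repair is easy. Take the path-lexicographic order that compares the root-to-leaf label words of leaves $1,2,\dots,n$ in turn. Compare words first by length, with shorter being larger, then lexicographically with $y>x$.
\begin{itemize}
\item This word order is compatible with concatenation on both sides, so the usual admissibility proof for path-lex orders applies.
\item Its first criterion is exactly the number of internal vertices off the leftmost branch, which is a correct version of your right-comb weight.
\item It selects exactly the three left-hand sides.
\end{itemize}

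With this order, your counting argument is a genuinely different and complete proof that bypasses the four S-polynomial computations. The normal monomials are $\{\mathbf 1\}\sqcup x(\mathcal N,\mathbf 1)\sqcup y(\mathcal N,\mathbf 1)\sqcup y(\mathcal N,x(\mathcal N,\mathbf 1))$. So their generating function satisfies $N=t+2tN+tN^2$, giving $C_n$ of them in arity $n$. This equals $\dim\Dend(n)$ by Loday, once you use $\non\com$-$\Zin\cong\Dend$.

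The paper runs this count in the opposite direction: Gr\"obner basis first, Catalan dimension as a consequence. Your version trades the explicit overlap check for importing Loday's dimension formula.
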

\begin{proof}
Fix an admissible monomial ordering on tree monomials for which the left-hand sides of
\eqref{GrobnerZin1}--\eqref{GrobnerZin3} are the leading terms. For more details on ordering and explicit computations, see \cite{BrD_Companion}.

The only overlapping compositions are
\begin{itemize}
    \item \eqref{GrobnerZin2} with \eqref{GrobnerZin2}: \(x(*\; x(*\; x(*\; *)))\);
    \item \eqref{GrobnerZin2} with \eqref{GrobnerZin1}: \(x(*\; x(*\; y(*\; *)))\);
    \item \eqref{GrobnerZin1} with \eqref{GrobnerZin3}: \(x(*\; y(*\; y(*\; *)))\);
    \item \eqref{GrobnerZin3} with \eqref{GrobnerZin3}: \(y(*\; y(*\; y(*\; *)))\).
\end{itemize}
It is straightforward that all these overlap compositions reduce to zero modulo
\eqref{GrobnerZin1}--\eqref{GrobnerZin3}. Hence, the given rewriting rules form a Gr\"obner basis.
\end{proof}

\begin{theorem}
For every \(n\ge 1\), the dimension of the \(n\)-th homogeneous component of the operad $\non\com$-$\Zin$ is equal to the \(n\)-th Catalan number, that is,
\[
\dim(\non\com\text{-}\Zin(n))=\frac{(2n)!}{n!(n+1)!}.
\]
\end{theorem}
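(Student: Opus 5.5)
The plan is to count normal forms with respect to the Gr\"obner basis of Lemma~\ref{GrobnerBasisZin}. Because $\non\com$-$\Zin$ is a nonsymmetric operad, $\non\com\text{-}\Zin(n)$ has a basis of the planar binary tree monomials with $n$ leaves (internal vertices labelled $x$ or $y$) that contain no divisor equal to a leading term of \eqref{GrobnerZin1}--\eqref{GrobnerZin3}. This is the standard consequence of having a Gr\"obner basis for the chosen admissible ordering. So it suffices to count these normal tree monomials in each arity $n$ and show the count is the Catalan number $C_n=\frac{(2n)!}{n!(n+1)!}$.

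First I translate the three leading terms into local conditions on a tree.
\begin{itemize}
\item $x(*\,y(*\,*))$ forbids an $x$-vertex whose right child is a $y$-vertex.
\item $x(*\,x(*\,*))$ forbids an $x$-vertex whose right child is an $x$-vertex.
\item $y(*\,y(*\,*))$ forbids a $y$-vertex whose right child is a $y$-vertex.
\end{itemize}
Hence a tree monomial is normal if and only if, at every internal vertex, the following holds: the right child of an $x$-vertex is a leaf, and the right child of a $y$-vertex is either a leaf or an $x$-vertex. There is no constraint on left children. Since all these conditions are local (parent--right child only), the normal forms admit a recursive description.

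Next I set up generating functions in a variable $t$ marking leaves. Let $T(t)$ be the series of all normal trees, including the single leaf. Let $A(t)$ and $B(t)$ be the series of normal trees whose root is labelled $x$ and $y$, respectively. The recursive description gives
\[
T=t+A+B,\qquad A=T\cdot t,\qquad B=T\cdot(t+A).
\]
Substituting yields $T=t+2tT+tT^2$. Putting $U=1+T$, this becomes $U=1+tU^2$, the functional equation of the Catalan generating function. Therefore the coefficient of $t^n$ in $T$ (for $n\ge1$) equals $C_n=\frac{(2n)!}{n!(n+1)!}$. As a sanity check, this gives $1,2,5$ for $n=1,2,3$: indeed there are $8$ tree monomials in arity $3$ and three relations.

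The main obstacle is not the counting but its justification. I have to make sure that the normal-form basis statement applies, i.e.\ that the ordering in Lemma~\ref{GrobnerBasisZin} really is admissible with those leading terms. I also have to check that no leading term can divide a tree at a non-local position, which holds because quadratic leading terms only see parent--child pairs. Granting Lemma~\ref{GrobnerBasisZin}, the remaining steps are the routine bijection between normal forms and the recursion above, and the elementary generating function computation.
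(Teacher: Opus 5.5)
Your proposal is correct and follows the paper's route: both rest on the Gr\"obner basis of Lemma~\ref{GrobnerBasisZin}, and your local description of normal forms (the right child of an $x$-vertex is a leaf, the right child of a $y$-vertex is a leaf or an $x$-vertex) is exactly the recursive class $\mathcal N=\{\mathbf 1\}\sqcup x(\mathcal N,\mathbf 1)\sqcup y(\mathcal N,\mathbf 1)\sqcup y(\mathcal N,x(\mathcal N,\mathbf 1))$ of Lemma~\ref{NormalFormsNonComZin}. The only difference is the final counting step: you turn this recursion into $T=t+2tT+tT^2$, i.e.\ $U=1+tU^2$ with $U=1+T$, whereas the paper gives the explicit bijection $\Phi$ of Lemma~\ref{BijectionNonComZin} with planar binary trees having $n$ internal vertices; your version is quicker, while the paper's gives an explicit combinatorial labelling of the basis.
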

\begin{proof}
The following two lemmas fully prove the given Theorem.

\begin{lemma}\label{NormalFormsNonComZin}
A tree monomial in $\non\com$-$\Zin$ is normal if and only if it belongs to the recursively defined class
$$
\mathcal N
=
\{\mathbf 1\}
\sqcup
x(\mathcal N,\mathbf 1)
\sqcup
y(\mathcal N,\mathbf 1)
\sqcup
y\bigl(\mathcal N, x(\mathcal N,\mathbf 1)\bigr).
$$
Equivalently, every nontrivial normal monomial has exactly one of the three forms
$$
x(u,\;\mathbf 1),\qquad
y(u,\;\mathbf 1),\qquad
y\bigl(u,\;x(v,\;\mathbf 1)\bigr),
$$
where $\mathbf 1$ denotes the unique tree monomial of arity $1$, and $u$, $v$ are normal tree monomials.
\end{lemma}

\begin{proof}
Let $T$ be a normal tree monomial. If the root of $T$ is labelled by $x$, then its right child cannot be internal. Indeed, if the right child were internal, then its root would be either $x$ or $y$, and $T$ would contain one of the forbidden divisors
$$
x(*\;x(*\;*)),\qquad x(*\;y(*\;*)).
$$
Hence
$$
T=x(u,\;\mathbf 1)
$$
for some normal tree monomial $u$. Assume now that the root of $T$ is labelled by $y$. Then its right child is either a leaf, in which case
$$
T=y(u,\;\mathbf 1),
$$
or an internal tree. In the latter case, the root of the right subtree cannot be labelled by $y$, otherwise $T$ would contain the forbidden divisor
$$
y(*\; y(*\; *)).
$$
Therefore the right subtree is rooted at $x$. Since an $x$-rooted normal subtree cannot have an internal right child, it must be of the form
$$
x(v,\;\mathbf 1).
$$
Hence
$$
T=y\bigl(u,\;x(v,\;\mathbf 1)\bigr)
$$
for some normal tree monomials $u$ and $v$.

Conversely, every tree monomial described by the recursive rule above avoids the three forbidden divisors
$$
x(*\;y(*\;*)),\qquad x(*\;x(*\;*)),\qquad y(*\;y(*\;*)).
$$
Thus it is normal.
\end{proof}

\begin{lemma}\label{BijectionNonComZin}
For every $n\ge 1$, there is a bijection between the set $\mathcal N_n$ of normal tree monomials of arity $n$ in $\non\com$-$\Zin$ and the set $\mathcal B_n$ of planar binary trees with $n$ internal vertices.
\end{lemma}

\begin{proof}
We define the bijection recursively.

For $n=1$, the set $\mathcal N_1$ consists of the single degenerate tree $\mathbf 1$, and $\mathcal B_1$ consists of the unique planar binary tree with one internal vertex. So the claim is clear.
Let $T\in\mathcal N_n$, where $n\ge 2$. By Lemma~\ref{NormalFormsNonComZin}, $T$ has exactly one of the following forms:
$$
T=x(u,\mathbf 1),\qquad
T=y(u,\mathbf 1),\qquad
T=y\bigl(u,x(v,\mathbf 1)\bigr).
$$

We associate with $T$ a planar binary tree $\Phi(T)$ as follows. If
$$T=x(u,\mathbf 1),$$
then $\Phi(T)$ is the planar binary tree whose root has left subtree $\Phi(u)$ and right child a leaf. If
$$T=y(u,\mathbf 1),$$
then $\Phi(T)$ is the planar binary tree whose root has left child a leaf and right subtree $\Phi(u)$. If
$$T=y\bigl(u,x(v,\mathbf 1)\bigr),$$
then $\Phi(T)$ is the planar binary tree whose root has left subtree $\Phi(v)$ and right subtree $\Phi(u)$. This gives a well-defined planar binary tree with $n$ internal vertices.

Conversely, let $B\in\mathcal B_n$. If $n=1$, set $\Phi^{-1}(B)=\mathbf 1$. If $n\ge 2$, inspect the two children of the root of $B$. If the right child is a leaf, write
$$
B=\operatorname{node}(L,\bullet)
$$
and set
$$
\Phi^{-1}(B)=x\bigl(\Phi^{-1}(L),\mathbf 1\bigr).
$$
If the left child is a leaf, write
$$
B=\operatorname{node}(\bullet,R)
$$
and set
$$
\Phi^{-1}(B)=y\bigl(\Phi^{-1}(R),\mathbf 1\bigr).
$$
If both children are internal, write
$$
B=\operatorname{node}(L,R)
$$
and set
$$
\Phi^{-1}(B)=y\bigl(\Phi^{-1}(R),x(\Phi^{-1}(L),\mathbf 1)\bigr).
$$
It is straightforward to verify that these constructions are mutually inverse.
\end{proof}

So, by Lemma \ref{GrobnerBasisZin}, Lemma \ref{NormalFormsNonComZin} and Lemma \ref{BijectionNonComZin},  we obtain
\[
\dim(\non\com\text{-}\Zin(n))=|\mathcal N_n|=|\mathcal B_n|=C_n.
\]

\end{proof}

In particular, for the operad $\non\com\textrm{-}\Zin$, we have
\[
\begin{array}{c|cccccccccc}
 n & 1 & 2 & 3 & 4 & 5 & 6 & 7 & 8 & 9 & 10\\ \hline
 \dim(\non\com\textrm{-}\Zin(n)) & 1 & 2 & 5 & 14 & 42 & 132 & 429 & 1430 & 4862 & 16796
\end{array}
\]

Let us describe the bijection of Lemma~\ref{BijectionNonComZin} explicitly for $n=3,4$, and $5$.

\begin{example}
For convenience, we represent a planar binary tree by a full parenthesization of the symbol $\bullet$. Thus, for example,
$$
((\bullet\bullet)\bullet)
$$
denotes the planar binary tree whose root has left subtree $(\bullet\bullet)$ and right child a leaf.

\medskip

For $n=3$, the set $\mathcal N_3$ consists of the following five normal monomials:
$$
x(x(\mathbf 1,\mathbf 1),\mathbf 1),\qquad
x(y(\mathbf 1,\mathbf 1),\mathbf 1),\qquad
y(x(\mathbf 1,\mathbf 1),\mathbf 1),\qquad
y(y(\mathbf 1,\mathbf 1),\mathbf 1),\qquad
y(\mathbf 1,x(\mathbf 1,\mathbf 1)).
$$
The bijection $\Phi_3:\mathcal N_3\to\mathcal B_3$ is given by
$$
x(x(\mathbf 1,\mathbf 1),\mathbf 1)
\longleftrightarrow
(((\bullet\bullet)\bullet)\bullet),\;
x(y(\mathbf 1,\mathbf 1),\mathbf 1)
\longleftrightarrow
((\bullet(\bullet\bullet))\bullet),
$$
$$
y(x(\mathbf 1,\mathbf 1),\mathbf 1)
\longleftrightarrow
(\bullet((\bullet\bullet)\bullet)),\;
y(y(\mathbf 1,\mathbf 1),\mathbf 1)
\longleftrightarrow
(\bullet(\bullet(\bullet\bullet))),
$$
$$
y(\mathbf 1,x(\mathbf 1,\mathbf 1))
\longleftrightarrow
((\bullet\bullet)(\bullet\bullet)).
$$

\medskip

For $n=4$, the bijection $\Phi_4:\mathcal N_4\to\mathcal B_4$ is given by
$$
x(x(x(\mathbf 1,\mathbf 1),\mathbf 1),\mathbf 1)
\;\leftrightarrow\;
((((\bullet\bullet)\bullet)\bullet)\bullet),\;x(x(y(\mathbf 1,\mathbf 1),\mathbf 1),\mathbf 1)
\;\leftrightarrow\;
(((\bullet(\bullet\bullet))\bullet)\bullet),
$$
$$
x(y(x(\mathbf 1,\mathbf 1),\mathbf 1),\mathbf 1)
\;\leftrightarrow\;
((\bullet((\bullet\bullet)\bullet))\bullet),
\;
x(y(y(\mathbf 1,\mathbf 1),\mathbf 1),\mathbf 1)
\;\leftrightarrow\;
((\bullet(\bullet(\bullet\bullet)))\bullet),
$$
$$
x(y(\mathbf 1,x(\mathbf 1,\mathbf 1)),\mathbf 1)
\;\leftrightarrow\;
(((\bullet\bullet)(\bullet\bullet))\bullet),
\;
y(x(x(\mathbf 1,\mathbf 1),\mathbf 1),\mathbf 1)
\;\leftrightarrow\;
(\bullet(((\bullet\bullet)\bullet)\bullet)),
$$
$$
y(x(y(\mathbf 1,\mathbf 1),\mathbf 1),\mathbf 1)
\;\leftrightarrow\;
(\bullet((\bullet(\bullet\bullet))\bullet)),
\;
y(y(x(\mathbf 1,\mathbf 1),\mathbf 1),\mathbf 1)
\;\leftrightarrow\;
(\bullet(\bullet((\bullet\bullet)\bullet))),
$$
$$
y(y(y(\mathbf 1,\mathbf 1),\mathbf 1),\mathbf 1)
\;\leftrightarrow\;
(\bullet(\bullet(\bullet(\bullet\bullet)))),
\;
y(y(\mathbf 1,x(\mathbf 1,\mathbf 1)),\mathbf 1)
\;\leftrightarrow\;
(\bullet((\bullet\bullet)(\bullet\bullet))),
$$
$$
y(\mathbf 1,x(x(\mathbf 1,\mathbf 1),\mathbf 1))
\;\leftrightarrow\;
((((\bullet\bullet)\bullet)(\bullet\bullet))),
\;
y(\mathbf 1,x(y(\mathbf 1,\mathbf 1),\mathbf 1))
\;\leftrightarrow\;
(((\bullet(\bullet\bullet))(\bullet\bullet))),
$$
$$
y(x(\mathbf 1,\mathbf 1),x(\mathbf 1,\mathbf 1))
\;\leftrightarrow\;
((\bullet\bullet)((\bullet\bullet)\bullet)),
\;
y(y(\mathbf 1,\mathbf 1),x(\mathbf 1,\mathbf 1))
\;\leftrightarrow\;
((\bullet\bullet)(\bullet(\bullet\bullet))).
$$

\medskip

For $n=5$, it is convenient to use the fourteen monomials from $\mathcal N_4$ listed above.
Denote them by
$$
U_1,\dots,U_{14},
$$
and denote their images in $\mathcal B_4$ by
$$
T_1,\dots,T_{14}.
$$
Then the bijection $\Phi_5:\mathcal N_5\to\mathcal B_5$ is described as follows.

First, there are fourteen monomials of the form
$$
x(U_i,\mathbf 1),\qquad i=1,\dots,14,
$$
and they correspond to the trees
$$
(T_i\;\bullet),\qquad i=1,\dots,14.
$$

Second, there are fourteen monomials of the form
$$
y(U_i,\mathbf 1),\qquad i=1,\dots,14,
$$
and they correspond to the trees
$$
(\bullet\; T_i),\qquad i=1,\dots,14.
$$

Finally, the bijection for the remaining fourteen monomials is
$$
y(\mathbf 1,x(x(x(\mathbf 1,\mathbf 1),\mathbf 1),\mathbf 1))
\;\leftrightarrow\;
((((\bullet\bullet)\bullet)\bullet)(\bullet\bullet)),
\qquad
y(\mathbf 1,x(x(y(\mathbf 1,\mathbf 1),\mathbf 1),\mathbf 1))
\;\leftrightarrow\;
(((\bullet(\bullet\bullet))\bullet)(\bullet\bullet)),
$$
$$
y(\mathbf 1,x(y(x(\mathbf 1,\mathbf 1),\mathbf 1),\mathbf 1))
\;\leftrightarrow\;
((\bullet((\bullet\bullet)\bullet))(\bullet\bullet)),
\qquad
y(\mathbf 1,x(y(y(\mathbf 1,\mathbf 1),\mathbf 1),\mathbf 1))
\;\leftrightarrow\;
((\bullet(\bullet(\bullet\bullet)))(\bullet\bullet)),
$$
$$
y(\mathbf 1,x(y(\mathbf 1,x(\mathbf 1,\mathbf 1)),\mathbf 1))
\;\leftrightarrow\;
(((\bullet\bullet)(\bullet\bullet))(\bullet\bullet)),
\qquad
y(x(\mathbf 1,\mathbf 1),x(x(\mathbf 1,\mathbf 1),\mathbf 1))
\;\leftrightarrow\;
(((\bullet\bullet)\bullet)((\bullet\bullet)\bullet)),
$$
$$
y(x(\mathbf 1,\mathbf 1),x(y(\mathbf 1,\mathbf 1),\mathbf 1))
\;\leftrightarrow\;
((\bullet(\bullet\bullet))((\bullet\bullet)\bullet)),
\qquad
y(y(\mathbf 1,\mathbf 1),x(x(\mathbf 1,\mathbf 1),\mathbf 1))
\;\leftrightarrow\;
(((\bullet\bullet)\bullet)(\bullet(\bullet\bullet))),
$$
$$
y(y(\mathbf 1,\mathbf 1),x(y(\mathbf 1,\mathbf 1),\mathbf 1))
\;\leftrightarrow\;
((\bullet(\bullet\bullet))(\bullet(\bullet\bullet))),
\qquad
y(x(x(\mathbf 1,\mathbf 1),\mathbf 1),x(\mathbf 1,\mathbf 1))
\;\leftrightarrow\;
((\bullet\bullet)(((\bullet\bullet)\bullet)\bullet)),
$$
$$
y(x(y(\mathbf 1,\mathbf 1),\mathbf 1),x(\mathbf 1,\mathbf 1))
\;\leftrightarrow\;
((\bullet\bullet)((\bullet(\bullet\bullet))\bullet)),
\qquad
y(y(x(\mathbf 1,\mathbf 1),\mathbf 1),x(\mathbf 1,\mathbf 1))
\;\leftrightarrow\;
((\bullet\bullet)(\bullet((\bullet\bullet)\bullet))),
$$
$$
y(y(y(\mathbf 1,\mathbf 1),\mathbf 1),x(\mathbf 1,\mathbf 1))
\;\leftrightarrow\;
((\bullet\bullet)(\bullet(\bullet(\bullet\bullet)))),
\qquad
y(y(\mathbf 1,x(\mathbf 1,\mathbf 1)),x(\mathbf 1,\mathbf 1))
\;\leftrightarrow\;
((\bullet\bullet)((\bullet\bullet)(\bullet\bullet))).
$$
\end{example}

\section{Nonsymmetric bicommutative operad}

The defining identities of the nonsymmetric bicommutative operad are given in Example \ref{ex3}.
\begin{lemma}\label{IDNonComBicom}
For any $n\geq 0$, in $\non\com$-$\Bicom$ the following rewriting rules hold:    
\begin{equation}\label{bicom1}
x\bigl(*\, \underbrace{x(x(\cdots x}_{n\text{ times}}(y(*\, *)\, *)\cdots )\, *)\bigr)
\;\rightarrow\;
\underbrace{x(x(\cdots x}_{n\text{ times}}(y(x(*\, *)\, *)\, *)\cdots )\,*)
\end{equation}
and
\begin{equation}\label{bicom2}
y\bigl(*\, \underbrace{y(y(\cdots y}_{n\text{ times}}(x(*\, *)\, *)\cdots )\, *)\bigr)
\;\rightarrow\;
\underbrace{y(y(\cdots y}_{n\text{ times}}(x(y(*\, *)\, *)\, *)\cdots )\,*),
\end{equation}
where $x(*\; *)$ and $y(*\; *)$ stand for $x_1\prec x_2$ and $x_1\succ x_2$, respectively.
\end{lemma}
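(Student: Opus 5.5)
The rules follow from the two defining identities of Example~\ref{ex3}, in the form
\[
x_1\succ (x_2\prec x_3)=(x_1\succ x_2)\prec x_3,\qquad x_1\prec (x_2\succ x_3)=(x_1\prec x_2)\succ x_3 .
\]
In tree notation these read $y(*\,x(*\,*))=x(y(*\,*)\,*)$ and $x(*\,y(*\,*))=y(x(*\,*)\,*)$. Write the left-hand side of \eqref{bicom1} with arbitrary inputs as
\[
a\prec\Bigl(\bigl(\cdots((b\succ c)\prec d_1)\prec\cdots\bigr)\prec d_n\Bigr).
\]
The case $n=0$ is exactly the second identity. For general $n$, the plan has three steps: move the operation $\succ$ to the top of the inner subtree, apply the second identity once at the root, and then push $\succ$ back down.

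\textbf{Step 1.} By induction on $k$, the first identity gives
\[
\bigl(\cdots((b\succ c)\prec d_1)\cdots\bigr)\prec d_k=b\succ w_k,\qquad w_k=\bigl(\cdots(c\prec d_1)\cdots\bigr)\prec d_k .
\]
The inductive step is $(b\succ w_{k-1})\prec d_k=b\succ(w_{k-1}\prec d_k)$.

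\textbf{Step 2.} The left-hand side now equals $a\prec(b\succ w_n)$. The second identity turns this into $(a\prec b)\succ w_n$.

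\textbf{Step 3.} Apply the same inductive argument as in Step~1, now with $a\prec b$ in place of $b$ and the equalities read from right to left:
\[
(a\prec b)\succ w_n=\Bigl(\cdots\bigl(((a\prec b)\succ c)\prec d_1\bigr)\cdots\Bigr)\prec d_n .
\]
The right-hand side is exactly the right-hand side of \eqref{bicom1}.

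For \eqref{bicom2}, use the anti-involution of the free operad that reverses the order of inputs and swaps the two operations, $u\prec v\mapsto v\succ u$. Applying it to $x_1\succ(x_2\prec x_3)=(x_1\succ x_2)\prec x_3$ gives $(x_3\succ x_2)\prec x_1=x_3\succ(x_2\prec x_1)$, which is the same identity after renaming; the second identity is likewise preserved. Hence $\non\com$-$\Bicom$ is stable under this involution. It exchanges $x$ and $y$ and mirrors the trees, so it carries \eqref{bicom1} to \eqref{bicom2}. Alternatively, one can repeat Steps~1--3 with the roles of the two identities exchanged.

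The only delicate point is bookkeeping: checking that the planar tree shapes on both sides match the underbraced expressions exactly. In particular, the chain of $n$ operations must hang along the left spine with the $d_i$ as right leaves. I expect this to be a careful reading of the notation rather than a real obstacle.
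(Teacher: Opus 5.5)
Your argument for \eqref{bicom1} is correct. It is the paper's proof with the induction unrolled. The paper inducts on $n$: it rewrites the innermost $x(y(*\,*)\,*)$ as $y(*\,x(*\,*))$, applies the case $n-1$ with $x(*\,*)$ plugged into the third input, and then uses $y(*\,x(*\,*))=x(y(*\,*)\,*)$ once more. Your Steps~1--3 do all $n$ upward moves at once, apply the second identity at the root, and move back down. This is the same sequence of rewritings.

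The symmetry argument for \eqref{bicom2}, however, does not work. Your own computation shows that the anti-involution $\tau\colon u\prec v\mapsto \tau(v)\succ\tau(u)$, $u\succ v\mapsto\tau(v)\prec\tau(u)$ sends each defining identity to \emph{itself}. So it cannot send the case $n=0$ of \eqref{bicom1}, which is the second identity, to the case $n=0$ of \eqref{bicom2}, which is the first. Explicitly, $\tau(a\prec(b\succ c))=(c\prec b)\succ a$ and $\tau((a\prec b)\succ c)=c\prec(b\succ a)$. Hence $\tau$ turns the rule $x(*\,y(*\,*))\to y(x(*\,*)\,*)$ into its own reverse. For general $n$, the image of the left-hand side of \eqref{bicom1} is a root $y$ whose \emph{left} child is a right comb of $y$'s ending in $x(*\,*)$. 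The left-hand side of \eqref{bicom2} instead has a \emph{right} child that is a left comb.

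Since \eqref{bicom2} is obtained from \eqref{bicom1} just by exchanging the letters $x$ and $y$, the symmetry you need is the plain exchange $u\prec v\mapsto u\succ v$, $u\succ v\mapsto u\prec v$, with no reversal of inputs. It interchanges the two defining identities, hence is an automorphism of $\non\com$-$\Bicom$, and it carries \eqref{bicom1} to \eqref{bicom2} verbatim. Your fallback, repeating Steps~1--3 with the two identities interchanged, is also correct, and it is what the paper does. With either repair the proof is complete.
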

\begin{proof}
We prove these identities by induction. The base of induction is $n=0$, which coincides with defining identities $\non\com$-$\Bicom$.

We prove \eqref{bicom1} and \eqref{bicom2} by using the inductive hypothesis as follows:
\begin{multline*}
x\bigl(*\, \underbrace{x(x(\cdots x}_{n\text{ times}}(y(*\, *)\, *)\cdots )\, *)\bigr)=x\bigl(*\, \underbrace{x(x(\cdots y}_{n-1\text{ times}}(*\, x(*\, *))\cdots )\, *)\bigr)=\\
\underbrace{x(x(\cdots x}_{n-1\text{ times}}(y(x(*\,*)\, x(*\, *)) \, *)\cdots )\, *)=\underbrace{x(x(\cdots x}_{n\text{ times}}(y(x(*\, *)\, *)\, *)\cdots )\,*)
\end{multline*}
and
\begin{multline*}
y\bigl(*\, \underbrace{y(y(\cdots y}_{n\text{ times}}(x(*\, *)\, *)\cdots )\, *)\bigr)=y\bigl(*\, \underbrace{y(y(\cdots x}_{n-1\text{ times}}(*\, y(*\, *))\cdots )\, *)\bigr)=\\
\underbrace{y(y(\cdots y}_{n-1\text{ times}}(x(y(*\, *)\, y(*\, *))\,*)\cdots )\, *)=\underbrace{y(y(\cdots y}_{n\text{ times}}(x(y(*\, *)\, *)\, *)\cdots )\,*).
\end{multline*}

\end{proof}

\begin{theorem}\label{GroebnerNonComBicom}
The family of rewriting rules of the form \eqref{bicom1} and \eqref{bicom2} forms a Gr\"obner basis.
\end{theorem}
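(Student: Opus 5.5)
We will prove that the infinite family \eqref{bicom1}, \eqref{bicom2} is a Gr\"obner basis of the operadic ideal generated by the defining relations of $\non\com$-$\Bicom$, using the standard Buchberger-type criterion for nonsymmetric operads (see \cite{BrD_Companion}). First we fix an admissible ordering of tree monomials: path-lexicographic, or degree-lexicographic on the sequence of labels read along the rightmost branch. It must make the left-hand sides of \eqref{bicom1}, \eqref{bicom2} the leading terms. The point is that in each rule the left-hand side has a deeper right branch than the right-hand side: an internal vertex is moved from a right child to a left child. So any ordering that prefers a longer right spine will do. By Lemma~\ref{IDNonComBicom}, every rule already lies in the ideal, and the case $n=0$ consists of the generating relations. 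Hence it suffices to show that all S-polynomials (overlaps, i.e.\ small common multiples) of pairs of leading monomials reduce to zero.

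Next we classify the overlaps. Write $L_n^x=x(*\,X_n)$ with $X_n=x(\cdots x(y(*\,*)\,*)\cdots\,*)$, and define $L_n^y$ symmetrically. A leading term $L_m^\bullet$ overlaps $L_n^\bullet$ only when the root of one sits on a vertex of the other and the divisor shapes agree on the shared vertices. There are two sources of overlap. First, the top vertex of one rule can sit inside the spine of another. Second, the left-hand sides form a family of trees where each one contains only one place that is \emph{not} a left child, namely the right edge at the root. An $x$-rule root can then lie at the root of $X_n$; this fails because the root of $X_n$ has its right child a leaf. It can also lie at the bottom $y$, which gives an overlap with a $y$-rule: the tree $x(*\,x^{n}(y(*\,Y_m)\,*))$ with $Y_m=y^m(x(*\,*)\,*)$. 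In addition, rules of the same type can be nested down the right child: $x(*\,x^n(y(*\,*)\,*))$ has no internal right child except at the root, so self-overlaps arise only when the whole left-hand side is identified, i.e.\ trivially. So we expect a single infinite family of genuine overlaps, indexed by $(n,m)$: the $x$-$y$ compositions $x(*\,x^n(y(*\,y^m(x(*\,*)\,*))\,*))$ and their $y$-$x$ mirror images.

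The main step, which we expect to be the obstacle, is to reduce these overlaps to zero uniformly in $n,m$. There are two reduction paths. Along the first, apply \eqref{bicom2} inside and then \eqref{bicom1}. Along the second, apply \eqref{bicom1} at the root and then repeatedly apply \eqref{bicom2}. We will show that both terminate at the common normal form
\[
x^{n}\bigl(y^{m}(x(y(x(*\,*)\,*)\,*)\,*)\cdots\bigr),
\]
meaning that all $x$'s and $y$'s are moved into left children. We will carry this out by induction on $n+m$, mirroring the inductive computation in Lemma~\ref{IDNonComBicom}. One complication is that after one rewrite, a new right-child occurrence of the form $y(*\,x(*\,*))$ can appear deeper in the tree. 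This new occurrence is again a leading term of a lower-index rule, so the induction hypothesis handles it. Finally, we use the $x\leftrightarrow y$ symmetry of the relations, which interchanges \eqref{bicom1} and \eqref{bicom2}, to dispose of the mirror overlaps without separate computation.
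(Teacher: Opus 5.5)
Your framework matches the paper's: you fix an order in which a longer right spine wins, note that the rules lie in the ideal by Lemma~\ref{IDNonComBicom}, and then check all compositions. Your mixed family $x(*\,x^n(y(*\,Y_m)\,*))$ does resolve. Both sides reduce to the left comb (all right children leaves) with labels $x^n y^m x y x$ read from the root. For $m=0$ this is exactly the paper's composition $(f_n,g_0)$, which yields $f_{n+1}$.

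The genuine gap is your classification of overlaps. You discard an $x$-rule rooted at a spine vertex of $X_n$ because that vertex has a leaf as right child, and you conclude that same-type overlaps are trivial. But in a small common multiple, a leaf of one divisor may be filled by internal vertices of the other. The two occurrences only have to share one vertex with the same label. So take any $n\ge 1$, $1\le i\le n$, $m\ge 0$, and graft $X_m$ onto the right leaf of the $i$-th $x$-vertex $c_i$ of $X_n$ inside $L_n^x$. The result is a common multiple of $L_n^x$ (at the root) and $L_m^x$ (at $c_i$), overlapping exactly in $c_i$. The smallest instance is $x(*\,x(y(*\,*)\,y(*\,*)))$, divisible by $L_1^x$ at the root and by $L_0^x$ at the inner $x$. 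The mirror $y$-family is missed in the same way. So your list omits infinitely many S-polynomials, and the Buchberger criterion has not been verified. In fact any two leading terms meet in a single vertex, and these two families together with your mixed ones are all the overlaps.

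The omission is repairable, since these overlaps also reduce to zero.
\begin{itemize}
\item Rewriting at the root first, by the rule of index $n$, gives the left comb $x^n y x$ with $X_m$ grafted on the right of its $i$-th vertex from the root. The rule of index $m$ then turns this into the left comb $x^{m+i-1}\,y\,x^{n-i+1}\,y\,x$.
\item Rewriting at $c_i$ first gives $x(*\,C)$, with $C$ the left comb $x^{m+i-1}\,y\,x^{n-i+1}\,y$. The rule of index $m+i-1$ at the root, followed by the rule of index $n-i+1$ at the new bottom $x$, gives the same comb.
\end{itemize}
The $y$-case follows by swapping labels.

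There is also a smaller inaccuracy in your mixed family. It is the path beginning with the inner $y$-rule that needs several further rewritings. The new divisors it creates are $x$-rooted, of type \eqref{bicom1}, not $y(*\,x(*\,*))$. Since both sides can be computed directly, no induction on $n+m$ is needed.
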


\begin{proof}
Fix an admissible monomial ordering on tree monomials for which the left-hand sides of \eqref{bicom1} and \eqref{bicom2} are the leading terms. 
For details on the ordering and on explicit computations, see \cite{BrD_Companion}.

For every $n\ge 0$, denote by $f_n$ and $g_n$ the rewriting rules \eqref{bicom1} and \eqref{bicom2} of degree $n+3$, respectively, and let
\[
S=\{f_n,g_n\mid n\ge 0\}.
\]

We first consider the initial compositions. The compositions of $f_0$ and $g_0$ at the monomials
\[
x(*\; y(*\; x(*\; *)))
\qquad\text{and}\qquad
y(*\; x(*\; y(*\; *)))
\]
produce precisely the rules $f_1$ and $g_1$, respectively. Hence they are trivial modulo $S$.

Next, the composition of $g_1$ with $f_0$ at
\[
y(*\; y(x(*\; y(*\; *))\; *))
\]
gives exactly the rule $g_2$, and the composition of $f_1$ with $g_0$ at
\[
x(*\; x(y(*\; x(*\; *))\; *))
\]
gives exactly the rule $f_2$.

In general, for every $n\ge 0$, the compositions of the pairs $(g_n,f_0)$ and $(f_n,g_0)$ at the monomials
\[
y\bigl(*\, y(y(\cdots x(*\, y(*\, *))\cdots )\, *)\bigr)
\]
and
\[
x\bigl(*\, x(x(\cdots y(*\, x(*\, *))\cdots )\, *)\bigr)
\]
produce precisely the rules $g_{n+1}$ and $f_{n+1}$, respectively. Therefore all these compositions are trivial modulo $S$.

It remains to verify that all other overlap compositions of rules from $S$ are also trivial. These computations are straightforward, although lengthy, and may be carried out directly or checked with the aid of computer algebra software; see \cite{DotsHij}. Thus all compositions are trivial modulo $S$, and therefore the family $S$ forms a Gr\"obner basis.
\end{proof}

\begin{remark}
Another way to prove Theorem~\ref{GroebnerNonComBicom} is to construct the multiplication table for a spanning set of the operad $\non\com\text{-}\Bicom$. The spanning monomials of $\non\com\text{-}\Bicom$ may be derived from Lemma~\ref{IDNonComBicom}.
\end{remark}

\begin{theorem}
For every \(n\ge 1\), the dimension of the \(n\)-th homogeneous component of the operad $\non\com$-$\Bicom$ is given by the \(n\)-th central binomial coefficient, that is,
\[
\dim(\non\com\text{-}\Bicom(n))=\frac{(2n-2)!}{(n-1)!\,(n-1)!}.
\]
\end{theorem}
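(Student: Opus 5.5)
By Theorem~\ref{GroebnerNonComBicom}, the rules \eqref{bicom1} and \eqref{bicom2} form a Gr\"obner basis. So I would compute $\dim(\non\com\text{-}\Bicom(n))$ as the number of normal tree monomials of arity $n$, that is, planar binary trees with $n$ leaves and $n-1$ internal vertices labelled $x$ or $y$ that contain no left-hand side of \eqref{bicom1} or \eqref{bicom2} as a divisor. The target is $\binom{2n-2}{n-1}$. Its generating function is $(1-4t)^{-1/2}$, so the plan is to find a recursive description of normal forms, like Lemma~\ref{NormalFormsNonComZin}, and compare generating functions.

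First I would describe the normal monomials. The forbidden divisors say the following. An $x$-vertex may not have as its right child a left comb of $x$'s of any length $n\ge0$ whose lowest left child is a $y$-vertex. By symmetry, a $y$-vertex may not have as its right child a left comb of $y$'s whose lowest left child is an $x$-vertex. So for each vertex $v$ whose right child $w$ is internal, descend from $w$ along left children while the label equals the label of $v$. The first vertex with a different label must not exist: the leftmost path from $w$ ends in a leaf without meeting the other label. Hence the normal trees are characterized as follows. If $v$ has label $a$ and its right child is internal, then the whole left spine of the right subtree is labelled $a$. The left spine of a subtree is the path from its root through successive left children.

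Next I would set up generating functions. Let $N(t)$ count normal trees by number of internal vertices, with the empty tree (a leaf) included, and let $N_a(t)$ count nonempty normal trees whose root has label $a$. By the $x\leftrightarrow y$ symmetry of the rules, $N_x=N_y=:M$, so $N=1+2M$. Decompose a tree along its left spine. The constraint on a right subtree hanging off a spine vertex labelled $a$ is that the subtree is empty or has left spine entirely labelled $a$. Let $P(t)$ count nonempty normal trees whose left spine is entirely labelled $x$. Such a tree is a spine of $k\ge1$ $x$-vertices, each carrying a right subtree that is empty or counted by $P$. So
\[
P=\sum_{k\ge1}\bigl(t(1+P)\bigr)^k=\frac{t(1+P)}{1-t(1+P)}.
\]
A general tree with root labelled $x$ is a spine of arbitrary labels, with each vertex's right subtree empty or of type $P$ in that vertex's own label. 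Hence
\[
N=\frac{1}{1-2t(1+P)}.
\]
Solving, $Q=1+P$ satisfies $Q=1/(1-tQ)$, so $Q$ is the Catalan generating function $C(t)$. Then $N=1/(1-2tC(t))=1/\sqrt{1-4t}$, the generating function of the central binomial coefficients. Shifting indices, so that arity $n$ corresponds to $n-1$ internal vertices, gives $\binom{2n-2}{n-1}$.

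The main obstacle is the first step: the precise divisor structure of the rules of arbitrary length, and the proof that the local condition on left spines of right subtrees is equivalent to avoiding all the leading monomials. I would handle it by checking two things. A divisor occurrence of \eqref{bicom1} at a vertex $v$ is exactly a violating first label change on the left spine of $v$'s right subtree. Conversely, every violation produces such a divisor, and the value of $n$ is determined by where the label changes. After that, the generating-function algebra is routine. I would also sanity-check the counts for small arities: $1,2,6,20$.
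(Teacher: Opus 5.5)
Your proposal is correct and follows essentially the paper's route. You use the Gr\"obner basis of Theorem~\ref{GroebnerNonComBicom}, then decompose normal monomials along the left spine. Your left-spine condition is equivalent to the paper's $\mathcal N=\{\mathbf 1\}\sqcup x(\mathcal N,\mathcal X)\sqcup y(\mathcal N,\mathcal Y)$, because your recursion for $P$ forces every right subtree to carry a single label. The only difference is the final count: you compute $N=1/(1-2tC(t))=(1-4t)^{-1/2}$, whereas the paper turns the same decomposition into an explicit bijection with balanced $E/N$ words, where each spine vertex gives a prime factor $E\,d\,N$ or $N\,e\,E$.
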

\begin{proof}
To prove the result, we first prove several supporting lemmas.
\begin{lemma}\label{NormalFormsNonComBicom}
Let
\[
\mathcal X=\{\mathbf 1\}\sqcup x(\mathcal X,\mathcal X),
\qquad
\mathcal Y=\{\mathbf 1\}\sqcup y(\mathcal Y,\mathcal Y).
\]
Thus $\mathcal X$ consists of all tree monomials whose internal vertices are labelled only by $x$, and $\mathcal Y$ consists of all tree monomials whose internal vertices are labelled only by $y$.

A tree monomial in $\non\com$-$\Bicom$ is normal if and only if it belongs to the recursively defined class
\[
\mathcal N=\{\mathbf 1\}
\sqcup
x(\mathcal N,\mathcal X)
\sqcup
y(\mathcal N,\mathcal Y).
\]
Equivalently, every nontrivial normal monomial has exactly one of the two forms
\[
x(u,\alpha),\qquad y(u,\beta),
\]
where $u$ is a normal tree monomial, $\alpha\in\mathcal X$, and $\beta\in\mathcal Y$.
\end{lemma}

\begin{proof}
We prove it by induction on the arity. For arity $1$, the only tree monomial is $\mathbf 1$, and it is normal. Assume that the statement holds for all arities less than $n$, and let $T$ be a normal tree monomial of arity $n$.

Suppose first that the root of $T$ is labelled by $x$, so that
\[
T=x(u,v),
\]
and we claim that $v\in\mathcal X$.

If $v=\mathbf 1$, there is nothing to prove. Assume that $v$ is internal. Then the root of $v$ cannot be labelled by $y$, otherwise $T$ would contain the forbidden divisor
\[
x(*\;y(*\;*)),
\]
which is the case $n=0$ of \eqref{bicom1}. Hence
\[
v=x(v_1,v_2).
\]

The subtree $v$ is itself normal, and its arity is strictly smaller than $n$. Therefore, by the induction hypothesis applied to the normal tree $v$, its right subtree satisfies
\[
v_2\in\mathcal X.
\]

Next, we show that $v_1\in\mathcal X$. If $v_1=y(w_1,x_2)$, then
\[
T=x(u,x(y(w_1,w_2),v_2))
\]
has the forbidden form by $n=1$ of \eqref{bicom1}. Therefore
\[
v=x(v_1,v_2)\in\mathcal X,\;\;\textrm{and}\;\; T=x(u,\alpha) \qquad\text{with}\qquad u\in\mathcal N,\ \alpha\in\mathcal X.
\]

The argument for a tree whose root is labelled by $y$ is completely symmetric. If
\[
T=y(u,v),
\]
then \(v\) must belong to \(\mathcal Y\). Otherwise, \(T\) would contain a forbidden divisor of type \eqref{bicom2}. Thus, every nontrivial normal monomial has one of the required two forms.

Conversely, let $T$ belong to the recursively defined class
\[
\mathcal N
=
\{\mathbf 1\}
\sqcup
x(\mathcal N,\mathcal X)
\sqcup
y(\mathcal N,\mathcal Y).
\]
If \(T=\mathbf 1\), then it is normal. Suppose
\[
T=x(u,\alpha),
\qquad
u\in\mathcal N,\ \alpha\in\mathcal X.
\]
By induction, \(u\) is normal. Since \(\alpha\) has no internal vertex labelled by \(y\), no forbidden divisor of type \eqref{bicom1} can involve the root of \(T\). Moreover, no forbidden divisor can lie inside \(\alpha\), because all internal vertices of \(\alpha\) are labelled by \(x\). Hence \(T\) is normal. The case
\[
T=y(u,\beta),
\qquad
u\in\mathcal N,\ \beta\in\mathcal Y,
\]
is analogical.

\end{proof}

\begin{lemma}\label{BijectionNonComBicom}
For every \(n\ge 1\), there is a bijection between the set \(\mathcal N_n\) of normal tree monomials of arity \(n\) in \(\non\com\)-\(\Bicom\) and the set \(\mathcal W_{n-1}\) of words in the alphabet \(\{E,N\}\) containing exactly \(n-1\) letters \(E\) and \(n-1\) letters \(N\). Equivalently, there is a bijection between \(\mathcal N_n\) and the set of lattice paths from \((0,0)\) to \((n-1,n-1)\) with steps \(E=(1,0)\) and \(N=(0,1)\).
\end{lemma}

\begin{proof}
For \(m\ge 0\), denote by \(\mathcal D_m^{+}\subset \mathcal W_m\) the set of Dyck words of semilength \(m\), i.e., the words \(w\in\mathcal W_m\) such that every prefix of \(w\) contains at least as many letters \(E\) as letters \(N\). Similarly, denote by \(\mathcal D_m^{-}\subset \mathcal W_m\) the set of reflected Dyck words, i.e., the words \(w\in\mathcal W_m\) such that every prefix of \(w\) contains at least as many letters \(N\) as letters \(E\).

Firstly, we define bijections
\[
\delta_x:\mathcal X\longrightarrow \mathcal D^{+}:= \bigsqcup_{m\ge 0}\mathcal D_m^{+},
\qquad
\delta_y:\mathcal Y\longrightarrow \mathcal D^{-}:= \bigsqcup_{m\ge 0}\mathcal D_m^{-},
\]
recursively by
\[
\delta_x(\mathbf 1)=\varepsilon,
\qquad
\delta_x\bigl(x(\alpha_1,\alpha_2)\bigr)
=
E\,\delta_x(\alpha_1)\,N\,\delta_x(\alpha_2),
\]
and
\[
\delta_y(\mathbf 1)=\varepsilon,
\qquad
\delta_y\bigl(y(\beta_1,\beta_2)\bigr)
=
N\,\delta_y(\beta_1)\,E\,\delta_y(\beta_2),
\]
where \(\varepsilon\) denotes the empty word. These are the standard inverse bijections defined recursively by
\[
\delta_x^{-1}(\varepsilon)=\mathbf 1,
\qquad
\delta_x^{-1}(E\,p\,N\,q)
=
x\bigl(\delta_x^{-1}(p),\delta_x^{-1}(q)\bigr),
\]
where \(E\,p\,N\) is the first-return decomposition of a Dyck word. Symmetrically,
\[
\delta_y^{-1}(\varepsilon)=\mathbf 1,
\qquad
\delta_y^{-1}(N\,p\,E\,q)
=
y\bigl(\delta_y^{-1}(p),\delta_y^{-1}(q)\bigr).
\]

Now, we define the mapping in general form as follows:
\[
\Phi:\mathcal N\longrightarrow \bigsqcup_{m\ge 0}\mathcal W_m
\]
recursively by
\[
\Phi(\mathbf 1)=\varepsilon,
\]
and, for every normal monomial, we define
\[
\Phi\bigl(x(u,\alpha)\bigr)
=
\Phi(u)\,E\,\delta_x(\alpha)\,N,
\]
\[
\Phi\bigl(y(u,\beta)\bigr)
=
\Phi(u)\,N\,\delta_y(\beta)\,E,
\]
where \(u\in\mathcal N\), \(\alpha\in\mathcal X\), and \(\beta\in\mathcal Y\), as in Lemma~\ref{NormalFormsNonComBicom}.

Let us construct the inverse map. Let \(w\in\mathcal W_{n-1}\). Decompose \(w\) uniquely at its returns to the diagonal:
\[
w=w_1w_2\cdots w_k,
\]
where each \(w_i\) is a nonempty balanced word having no proper nonempty balanced prefix. Then each \(w_i\) is necessarily of one of the two forms
\[
w_i=E\,d_i\,N
\qquad\text{with}\qquad
d_i\in\mathcal D^{+},
\]
or
\[
w_i=N\,e_i\,E
\qquad\text{with}\qquad
e_i\in\mathcal D^{-}.
\]
Indeed, each factor \(w_i\) starts on the diagonal and returns to the diagonal for the first time at its endpoint, so between its endpoints it stays entirely on one side of the diagonal.

Starting with
\[
T_0=\mathbf 1,
\]
define recursively
\[
T_i=
x\bigl(T_{i-1},\delta_x^{-1}(d_i)\bigr)
\quad\text{if } w_i=E\,d_i\,N,
\]
and
\[
T_i=
y\bigl(T_{i-1},\delta_y^{-1}(e_i)\bigr)
\quad\text{if } w_i=N\,e_i\,E.
\]
By Lemma~\ref{NormalFormsNonComBicom}, each \(T_i\) is a normal tree monomial. Moreover, a straightforward induction on \(i\) shows that
\[
\Phi(T_i)=w_1w_2\cdots w_i.
\]
In particular,
\[
\Phi(T_k)=w.
\]
Thus we obtain an explicit inverse map
\[
\Phi^{-1}:\mathcal W_{n-1}\longrightarrow \mathcal N_n.
\]

\end{proof}

So, by Theorem~\ref{GroebnerNonComBicom}, Lemma~\ref{NormalFormsNonComBicom}, and Lemma~\ref{BijectionNonComBicom}, we obtain
\[
\dim(\non\com\text{-}\Bicom(n))=|\mathcal N_n|=|\mathcal W_{n-1}|=\binom{2n-2}{n-1}.
\]
    
\end{proof}

In particular, for the operad $\non\com\textrm{-}\Bicom$, we have
\[
\begin{array}{c|cccccccccc}
 n & 1 & 2 & 3 & 4 & 5 & 6 & 7 & 8 & 9 & 10\\ \hline
 \dim(\non\com\textrm{-}\Bicom(n)) & 1 &  2 &  6 &  20 &  70 &  252 &  924 &  3432 &  12870 &  48620
\end{array}
\]

\begin{example}
For \(n=3\), the bijection \(\Phi_3:\mathcal N_3\to\mathcal W_2\) is given by
$$
x(\mathbf 1,x(\mathbf 1,\mathbf 1))
\;\leftrightarrow\;
ENEN,
\qquad
x(x(\mathbf 1,\mathbf 1),\mathbf 1)
\;\leftrightarrow\;
EENN,
$$
$$
y(x(\mathbf 1,\mathbf 1),\mathbf 1)
\;\leftrightarrow\;
ENNE,
\qquad
x(y(\mathbf 1,\mathbf 1),\mathbf 1)
\;\leftrightarrow\;
NEEN,
$$
$$
y(y(\mathbf 1,\mathbf 1),\mathbf 1)
\;\leftrightarrow\;
NNEE,
\qquad
y(\mathbf 1,y(\mathbf 1,\mathbf 1))
\;\leftrightarrow\;
NENE.
$$

Let us demonstrate the bijections $\Phi_4:\mathcal N_4\to\mathcal W_3$ and $\Phi_5:\mathcal N_5\to\mathcal W_4$ explicitly.

For $n=4$, the bijection $\Phi_4$ is given by
$$
x(\mathbf 1,x(x(\mathbf 1,\mathbf 1),\mathbf 1))
\;\leftrightarrow\;
ENEENN,\qquad
x(\mathbf 1,x(\mathbf 1,x(\mathbf 1,\mathbf 1)))
\;\leftrightarrow\;
ENENEN.
$$
$$
x(x(\mathbf 1,x(\mathbf 1,\mathbf 1)),\mathbf 1)
\;\leftrightarrow\;
EENENN,\qquad
y(x(\mathbf 1,x(\mathbf 1,\mathbf 1)),\mathbf 1)
\;\leftrightarrow\;
ENENNE.
$$
$$
x(x(\mathbf 1,\mathbf 1),x(\mathbf 1,\mathbf 1))
\;\leftrightarrow\;
EENNEN,\qquad
x(x(x(\mathbf 1,\mathbf 1),\mathbf 1),\mathbf 1)
\;\leftrightarrow\;
EEENNN.
$$
$$
y(x(x(\mathbf 1,\mathbf 1),\mathbf 1),\mathbf 1)
\;\leftrightarrow\;
EENNNE,\qquad
x(y(x(\mathbf 1,\mathbf 1),\mathbf 1),\mathbf 1)
\;\leftrightarrow\;
ENNEEN.
$$
$$
y(y(x(\mathbf 1,\mathbf 1),\mathbf 1),\mathbf 1)
\;\leftrightarrow\;
ENNENE,\qquad
y(x(\mathbf 1,\mathbf 1),y(\mathbf 1,\mathbf 1))
\;\leftrightarrow\;
ENNNEE.
$$
$$
x(y(\mathbf 1,\mathbf 1),x(\mathbf 1,\mathbf 1))
\;\leftrightarrow\;
NEEENN,\qquad
x(x(y(\mathbf 1,\mathbf 1),\mathbf 1),\mathbf 1)
\;\leftrightarrow\;
NEENEN.
$$
$$
y(x(y(\mathbf 1,\mathbf 1),\mathbf 1),\mathbf 1)
\;\leftrightarrow\;
NEENNE,\qquad
x(y(y(\mathbf 1,\mathbf 1),\mathbf 1),\mathbf 1)
\;\leftrightarrow\;
NNEEEN.
$$
$$
y(y(y(\mathbf 1,\mathbf 1),\mathbf 1),\mathbf 1)
\;\leftrightarrow\;
NNNEEE,\qquad
y(y(\mathbf 1,\mathbf 1),y(\mathbf 1,\mathbf 1))
\;\leftrightarrow\;
NNEENE.
$$
$$
x(y(\mathbf 1,y(\mathbf 1,\mathbf 1)),\mathbf 1)
\;\leftrightarrow\;
NENEEN,\qquad
y(y(\mathbf 1,y(\mathbf 1,\mathbf 1)),\mathbf 1)
\;\leftrightarrow\;
NNENEE.
$$
$$
y(\mathbf 1,y(\mathbf 1,y(\mathbf 1,\mathbf 1)))
\;\leftrightarrow\;
NENENE,\qquad
y(\mathbf 1,y(y(\mathbf 1,\mathbf 1),\mathbf 1))
\;\leftrightarrow\;
NENNEE.
$$

\end{example}

\section{Nonsymmetric (anti)-flexible operad}

The defining identity of the nonsymmetric flexible operad is given in Lemma \ref{NoAlt}.

\begin{lemma}\label{basisNonComFlex}
For the operad $\non\com$-$\Flex$, the following rewriting system forms a Gr\"obner basis:
\begin{equation}\label{NonComFlexID1}
    y(*\, y(*\, *))
\;\rightarrow\;
\,x(*\, x(*\, *))
+ y(y(*\, *)\, *)
- x(x(*\, *)\, *),
\end{equation}
and
\begin{multline}\label{NonComFlexID2}
y(*\, x(*\, x(*\, *)))
\;\rightarrow\;\;
y(*\, x(x(*\, *)\, *))
+ x(*\, x(*\, y(*\, *))) - x(*\, x(y(*\, *)\, *))
- y(x(*\, x(*\, *))\, *) \\
+ x(y(*\, *)\, x(*\, *))
- x(x(*\, *)\, y(*\, *)) + x(x(*\, y(*\, *))\, *)
+ y(x(x(*\, *)\, *)\, *) - x(x(y(*\, *)\, *)\, *) .
\end{multline}
Here \(x(*\, *)\) and \(y(*\, *)\) stand for \(x_1\prec x_2\) and \(x_1\succ x_2\), respectively.
\end{lemma}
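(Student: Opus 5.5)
We will follow the same scheme as in Lemma~\ref{GrobnerBasisZin} and Theorem~\ref{GroebnerNonComBicom}. First we write the defining relation of $\non\com$-$\Flex$ from the proof of Lemma~\ref{NoAlt} in tree notation, with $x=\prec$ and $y=\succ$:
\[
y(y(*\,*)\,*)-y(*\,y(*\,*))-x(x(*\,*)\,*)+x(*\,x(*\,*))=0 .
\]
We then fix an admissible ordering on planar tree monomials, for instance path-lexicographic with $y>x$ and right combs weighted higher. Under this ordering the leading monomial of the relation is $y(*\,y(*\,*))$, which gives the rule \eqref{NonComFlexID1}.

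Since the operad is nonsymmetric with a single generating relation of arity $3$, the only arity-$4$ self-overlap of \eqref{NonComFlexID1} is $y(*\,y(*\,y(*\,*)))$. We compute this S-polynomial by rewriting it in two ways: first through the outer occurrence of $y(*\,y(*\,*))$, then through the inner one. We then reduce the difference completely using \eqref{NonComFlexID1}. We expect a nonzero remainder; once normalized, its leading term should be $y(*\,x(*\,x(*\,*)))$, and the remainder is exactly \eqref{NonComFlexID2}. This shows that \eqref{NonComFlexID2} lies in the operadic ideal.

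Next we must show that $\{\eqref{NonComFlexID1},\eqref{NonComFlexID2}\}$ is closed, i.e.\ that every S-polynomial reduces to zero. The overlaps to check are the following:
\begin{itemize}
\item \eqref{NonComFlexID1} with \eqref{NonComFlexID2}, at $y(*\,y(*\,x(*\,x(*\,*))))$;
\item \eqref{NonComFlexID2} with itself, along the right spine, at $y(*\,x(*\,x(*\,x(*\,*))))$ with root shifted;
\item \eqref{NonComFlexID2} with \eqref{NonComFlexID1}, where the leaf of $x(*\,*)$ in \eqref{NonComFlexID2} carries a $y$. This does not occur, because \eqref{NonComFlexID2} has no $y$ below $x$ in its leading term.
\end{itemize}
The overlaps therefore live in arities $5$ and $6$, and we reduce each of them to zero.

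As a sanity check, before grinding through these reductions we compare the normal monomials, i.e.\ those avoiding both leading terms, in arity $4$ with the dimension of $\non\com$-$\Flex(4)$. That dimension can be obtained independently from the $\As\circ\Flex$ description, or by direct linear algebra on the $2^3\cdot 5=40$ planar monomials modulo the consequences of the relation. If the counts match in arity $4$ and $5$, this strongly suggests the chosen ordering is right.

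The main obstacle is the verification of the arity-$5$ and arity-$6$ compositions. The right-hand side of \eqref{NonComFlexID2} has ten terms, so the reductions are long. We would carry them out with computer algebra, e.g.\ the operadic Gröbner basis software cited in \cite{DotsHij}, exactly as was done for Theorem~\ref{GroebnerNonComBicom}. By hand, the cleanest route is a diamond-lemma style check: we confirm that the number of normal monomials equals $\dim\non\com\text{-}\Flex(n)$ for $n\le 6$ via explicit linear algebra. This suffices, because all ambiguities live in arity at most $6$.
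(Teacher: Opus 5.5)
Your proposal follows the same route as the paper: the self-overlap $y(*\,y(*\,y(*\,*)))$ of \eqref{NonComFlexID1} yields exactly \eqref{NonComFlexID2}, and the one remaining composition, at $y(*\,y(*\,x(*\,x(*\,*))))$, reduces to zero; these are precisely the two overlaps the paper lists. One correction: the leading term $y(*\,x(*\,x(*\,*)))$ contains a single $y$, at its root, so \eqref{NonComFlexID2} has no self-overlap (the tree $y(*\,x(*\,x(*\,x(*\,*))))$ contains it only once), and the only ambiguity beyond arity $4$ lies in arity $5$, so no arity-$6$ check is needed.
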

\begin{proof}
Fix an admissible monomial ordering on tree monomials for which the left-hand side of
\eqref{NonComFlexID1} is the leading term.

The first nontrivial overlap composition is
\begin{itemize}
    \item \eqref{NonComFlexID1} with \eqref{NonComFlexID1}: $y(*\; y(*\; y(*\; *)))$,
\end{itemize}
which gives exactly \eqref{NonComFlexID2}.
The next overlapping composition is
\begin{itemize}
    \item \eqref{NonComFlexID1} with \eqref{NonComFlexID2}: $y(*\; y(*\; x(*\; x(*\; *))))$.
\end{itemize}

It is straightforward that all these overlap compositions reduce to zero modulo
\eqref{NonComFlexID1} and \eqref{NonComFlexID2}. Hence, the given rewriting rules form a Gr\"obner basis.

\end{proof}

\begin{theorem}\label{DimNonComFlex}
For every \(n\ge 1\), the dimension of the \(n\)-th homogeneous component of the operad $\non\com$-$\Flex$ is given by
\[
a(n):=\frac{1}{n}\binom{3n-2}{n-1}.
\]
Equivalently,
\[
\dim(\non\com\text{-}\Flex(n))
=
\frac{(3n-2)!}{n!\,(2n-1)!}.
\]
\end{theorem}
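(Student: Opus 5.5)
The plan is to proceed exactly as for $\non\com$-$\Zin$ and $\non\com$-$\Bicom$. By Lemma~\ref{basisNonComFlex}, the rules \eqref{NonComFlexID1} and \eqref{NonComFlexID2} form a Gr\"obner basis. Hence $\dim(\non\com\text{-}\Flex(n))$ equals the number of normal tree monomials of arity $n$, i.e.\ binary planar trees with internal vertices labelled by $x$ or $y$ that avoid the two leading terms
\[
y(*\; y(*\; *)),\qquad y(*\; x(*\; x(*\; *))).
\]
First I would characterize the normal monomials recursively, in the style of Lemma~\ref{NormalFormsNonComZin}. A monomial $T$ is normal if and only if three conditions hold:
\begin{itemize}
\item both of its subtrees are normal;
\item if the root of $T$ is $y$, then its right child is not $y$-rooted;
\item if the root of $T$ is $y$ and its right child is $x$-rooted, then that child's right child is not $x$-rooted.
\end{itemize}
This is immediate because both forbidden divisors are rooted at the root $y$ and go only along right branches.

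Next I would translate this into generating functions, weighted by $t^{\text{arity}}$. Let $T(t)$ count all normal monomials, including $\mathbf 1$ with weight $t$. Let $X(t)$ and $Y(t)$ count those rooted at $x$ and at $y$. Let $Q(t)$ count the $x$-rooted ones whose right child is not $x$-rooted. The characterization gives
\[
X=T^2,\qquad Q=T\,(t+Y),\qquad Y=T\,(t+Q),\qquad T=t+X+Y.
\]
As a sanity check, in arity $2$ this gives $2$, and in arity $3$ it gives $4+3=7$, which agrees with $8-1$ coming from the single cubic relation.

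Then I would eliminate variables to get one algebraic equation for $T$. From the $Q$ and $Y$ equations we get $Y=tT(1+T)/(1-T^2)=tT/(1-T)$, so
\[
T=t+T^2+\frac{tT}{1-T}.
\]
On the other side, the target sequence $a(n)=\frac1n\binom{3n-2}{n-1}$ should be recognized via Lagrange--B\"urmann inversion. Take $B=t(1+B)^3$ and $H(u)=u/(1+u)$, so that $H'(u)=(1+u)^{-2}$. Then
\[
[t^n]H(B)=\frac1n[u^{n-1}]H'(u)(1+u)^{3n}=\frac1n\binom{3n-2}{n-1}.
\]
So the claim reduces to $T=B/(1+B)$. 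Equivalently, with $C=1+B$ we have $C=1+tC^3$ and $T=1-1/C$.

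The final step is to substitute $T=1-1/C$, i.e.\ $C=1/(1-T)$, into $C=1+tC^3$. This gives $1/(1-T)=1+t/(1-T)^3$, that is, $T(1-T)^2=t$. I would then check that this cubic is equivalent to the equation derived above. Multiplying $T=t+T^2+tT/(1-T)$ by $1-T$ gives $T(1-T)-T^2(1-T)=t(1-T)+tT=t$, i.e.\ $T(1-T)^2=t$. So the two equations agree, and both series have constant term $0$ and linear coefficient $1$, which makes the root unique.

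I expect the main obstacle to be the first step, not the algebra: making sure the normal forms are exactly the trees avoiding these two patterns. This relies fully on Lemma~\ref{basisNonComFlex} being a complete Gr\"obner basis, in particular that no further leading terms arise from higher overlaps. If that were in doubt, I would cross-check the counts $1,2,7,30,143$ for $n\le 5$ against a direct computation of the operad.
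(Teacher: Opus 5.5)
Your proof is correct, but it counts the normal monomials by a different route from the paper.

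\textbf{Where the two proofs agree.} The paper derives the same recursive description of normal monomials that you do. Its classes $\mathcal N$, $\mathcal R$, $\mathcal Q$ correspond to your series $T$, $t+Q$, $t+Y$.

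\textbf{How the paper finishes.} Instead of writing down generating functions, the paper builds an explicit bijection with the normal monomials of Leroux's $L$-operad, which is defined by $z(t(*\,*)\,*)=t(*\,z(*\,*))$. The bijection sends $z\mapsto x$ and sends right combs of $t$'s to alternating right spines of $y$'s and $x$'s. The value $a(n)$ is then imported from Leroux's paper. The paper also needs the fact, asserted without proof, that $t(*\,z(*\,*))\to z(t(*\,*)\,*)$ is a Gr\"obner basis for the $L$-operad.

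\textbf{What your route changes.} You replace both external inputs with an elementary computation: eliminating $Q$ and $Y$ gives $T(1-T)^2=t$, and Lagrange inversion finishes. In fact the paper's recursion for the $L$-operad, $S=t+S^2+SU$ with $U=t+SU$, yields exactly your equation $S=t+S^2+tS/(1-S)$. So the paper's bijection is a combinatorial witness of the identity you verify algebraically.

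\textbf{Trade-off.} The paper's approach gives an explicit structural correspondence between the bases of the two operads. Yours is self-contained and produces the closed formula directly. Both arguments rest equally on Lemma~\ref{basisNonComFlex}.

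\textbf{A shortcut.} You can shorten your last step. Applying Lagrange inversion directly to $T=t(1-T)^{-2}$ gives $[t^n]T=\frac1n[u^{n-1}](1-u)^{-2n}=\frac1n\binom{3n-2}{n-1}$, with no detour through $B=t(1+B)^3$.
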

\begin{proof}
Firstly, let us define the set of normal tree monomials for \(\non\com\)-\(\Flex\). By Theorem~\ref{basisNonComFlex}, the normal tree monomials in \(\non\com\)-\(\Flex\) are precisely the tree monomials which avoid the leading terms
\[
F_1:=y(*\,y(*\,*)),
\qquad
F_2:=y(*\,x(*\,x(*\,*))).
\]

Let \(\mathcal N\) be the class of all normal tree monomials, and let \(\mathcal R\subset \mathcal N\) be the class of those normal tree monomials which may occur as the right subtree of a root labelled by \(y\), without creating at the root either \(F_1\) or \(F_2\).

We claim that
\begin{equation}\label{RecNFlex1}
\mathcal N
=
\{\mathbf 1\}
\sqcup
x(\mathcal N,\mathcal N)
\sqcup
y(\mathcal N,\mathcal R),
\end{equation}
and
\begin{equation}\label{RecNFlex2}
\mathcal R
=
\{\mathbf 1\}
\sqcup
x\bigl(\mathcal N,\{\mathbf 1\}\sqcup y(\mathcal N,\mathcal R)\bigr).
\end{equation}

Indeed, let \(T\in\mathcal N\) be nontrivial. If the root of \(T\) is labelled by \(x\), then no forbidden divisor can occur at the root, since both \(F_1\) and \(F_2\) start with \(y\). Hence both subtrees of the root are arbitrary normal tree monomials, and \(T\in x(\mathcal N,\mathcal N)\).

Suppose that the root of \(T\) is labelled by \(y\), and let \(r\) be its right subtree. Then \(r\) cannot have root \(y\), otherwise \(T\) would contain \(F_1\) at the root. Thus \(r\) is either \(\mathbf 1\), or \(r=x(u,v)\). In the latter case, the subtree \(v\) cannot have root \(x\), otherwise \(T\) would contain \(F_2\) at the root. Therefore
\[
v\in \{\mathbf 1\}\sqcup y(\mathcal N,\mathcal R),
\]
and hence \(r\in\mathcal R\). This proves \eqref{RecNFlex1} and \eqref{RecNFlex2}. The converse inclusion is immediate by induction on the arity.

In \cite{Leroux}, it was shown that the dimension of the \(L\)-operad defined by the identity
\[
z(t(*\; *)\; *)=t(*\;z(*\;*))
\]
is equal to \(a(n)\).

It is straightforward to verify that the rewriting rule
\[
t(*\;z(*\;*))\rightarrow z(t(*\; *)\; *)
\]
forms a Gr\"obner basis for the \(L\)-operad. Therefore, the set \(\mathcal S\) of normal tree monomials in the \(L\)-operad is given by
\[
\mathcal{S}=\{\mathbf 1\}
\sqcup
z(\mathcal S,\mathcal S)
\sqcup
t(\mathcal S,t(\mathcal S,t(\cdots t(\mathcal S,\mathcal S)\cdots))).
\]

On the other hand, the set of normal tree monomials in \(\non\com\)-\(\Flex\), described by \eqref{RecNFlex1} and \eqref{RecNFlex2}, may be written in the form
\[
\mathcal{N}=\{\mathbf 1\}
\sqcup
x(\mathcal N,\mathcal N)
\sqcup
y(\mathcal N,x(\mathcal N,y(\mathcal N,x(\cdots)))).
\]
Hence, there is a natural bijection between \(\mathcal S\) and \(\mathcal N\), given as follows:

Let
\[
\mathcal U:=\{\mathbf 1\}\sqcup t(\mathcal S,\mathcal U),
\qquad
\mathcal Q:=\{\mathbf 1\}\sqcup y(\mathcal N,\mathcal R).
\]
Then
\[
\mathcal S=\{\mathbf 1\}\sqcup z(\mathcal S,\mathcal S)\sqcup t(\mathcal S,\mathcal U),
\]
\[
\mathcal N=\{\mathbf 1\}\sqcup x(\mathcal N,\mathcal N)\sqcup y(\mathcal N,\mathcal R),
\qquad
\mathcal R=\{\mathbf 1\}\sqcup x(\mathcal N,\mathcal Q).
\]

There is a bijection between \(\mathcal S\) and \(\mathcal N\), defined simultaneously with bijections
\[
\Phi_R:\mathcal U\to\mathcal R,
\qquad
\Phi_Q:\mathcal U\to\mathcal Q,
\]
as follows:
\[
\Phi(\mathbf 1)=\mathbf 1,
\qquad
\Phi_R(\mathbf 1)=\mathbf 1,
\qquad
\Phi_Q(\mathbf 1)=\mathbf 1,
\]
\[
\Phi\bigl(z(u,v)\bigr)=x\bigl(\Phi(u),\Phi(v)\bigr),
\]
\[
\Phi\bigl(t(u,v)\bigr)=y\bigl(\Phi(u),\Phi_R(v)\bigr),
\]
\[
\Phi_R\bigl(t(u,v)\bigr)=x\bigl(\Phi(u),\Phi_Q(v)\bigr),
\]
\[
\Phi_Q\bigl(t(u,v)\bigr)=y\bigl(\Phi(u),\Phi_R(v)\bigr).
\]
In other words, the operation \(z\) is sent to \(x\), and every right comb of \(t\)'s is sent to the alternating right spine of \(x\)'s and \(y\)'s in \(\non\com\)-\(\Flex\).

The inverse maps are defined recursively by
\[
\Psi(\mathbf 1)=\mathbf 1,\qquad
\Psi\bigl(x(u,v)\bigr)=z(\Psi(u),\Psi(v)),
\]
\[
\Psi\bigl(y(u,r)\bigr)=t(\Psi(u),\Psi_R(r)),
\]
\[
\Psi_R(\mathbf 1)=\mathbf 1,\qquad
\Psi_R\bigl(x(u,q)\bigr)=t(\Psi(u),\Psi_Q(q)),
\]
\[
\Psi_Q(\mathbf 1)=\mathbf 1,\qquad
\Psi_Q\bigl(y(u,r)\bigr)=t(\Psi(u),\Psi_R(r)).
\]
It is straightforward to verify that these maps are mutually inverse.

Therefore
\[
\dim(\non\com\text{-}\Flex(n))=|\mathcal N_n|=|\mathcal S_n|=a_n.
\]

\end{proof}

One of the combinatorial objects counted by the sequence \(a(n)\), namely OEIS A006013, is the following.
\begin{corollary}
For every \(n\ge 1\), there is a bijection between the set \(\mathcal N_n\) of normal tree monomials of arity \(n\) in \(\non\com\)-\(\Flex\) and the set
\[
\bigsqcup_{i+j=n-1}\mathcal T_i\times \mathcal T_j,
\]
where \(\mathcal T_m\) denotes the set of plane rooted ternary trees with exactly \(m\) internal vertices.
\end{corollary}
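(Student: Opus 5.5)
The plan is to build the bijection explicitly from the recursive description of $\mathcal N$ in the proof of Theorem~\ref{DimNonComFlex}, namely \eqref{RecNFlex1}--\eqref{RecNFlex2}, without routing through the $L$-operad. Plane ternary trees satisfy $\mathcal T=\{\bullet\}\sqcup \tau(\mathcal T,\mathcal T,\mathcal T)$, so $|\mathcal T_m|=\frac{1}{2m+1}\binom{3m}{m}$. The target set $\bigsqcup_{i+j=n-1}\mathcal T_i\times\mathcal T_j$ is then the set of ``pairs of ternary trees'', whose generating function is $T(q)^2$. Here $T(q)=1+qT(q)^3$, and the count is $\frac{2}{3m+2}\binom{3m+2}{m}=\frac1{m+1}\binom{3m+1}{m}$ with $m=n-1$, which equals $a(n)$. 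This confirms the cardinalities agree, so it remains to give an explicit map.

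The main step is to find grammars on the normal side matching this ternary structure. I will set $N(q)=\sum_n |\mathcal N_n|q^{n-1}$ (graded by the number of internal vertices) and, analogously, $R(q)$ and $Q(q)$ for $\mathcal R$ and $\mathcal Q=\{\mathbf 1\}\sqcup y(\mathcal N,\mathcal R)$. The recursions give
\[
N=1+qN^2+qNR,\qquad R=1+qNQ,\qquad Q=1+qNR.
\]
By the uniqueness of the power-series solution, $R=Q$, so $R=1+qNR$ and $N=1+qN(N+R)$.

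I expect the identification $R=T$ and $N=T^2$. Indeed, $T=1+qT^3$ gives $T^2=1+qT^2(T^2+T)$, which is the system above once we put $R=T$ and $N=T^2$. Combinatorially, I would define simultaneous maps $\rho:\mathcal R\to\mathcal T$ and $\nu:\mathcal N\to\mathcal T\times\mathcal T$, preserving degree, where a tree of arity $n$ corresponds to $n-1$ internal vertices.

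For $\nu$, write $\nu(u)=(a,b)$, $\nu(v)=(c,d)$, and $\rho(r)=e$. The natural recursion, which uses the decomposition of the first component at its root, is:
\begin{align*}
\nu(\mathbf 1)&=(\bullet,\bullet),\\
\nu\bigl(x(u,v)\bigr)&=\bigl(\tau(a,b,c),d\bigr),\\
\nu\bigl(y(u,r)\bigr)&=\bigl(\bullet,\tau(a,b,e)\bigr).
\end{align*}
Let me check bijectivity. A pair $(P,Q')$ with $P\neq\bullet$ comes uniquely from the $x$-case. A pair $(\bullet,Q')$ with $Q'\neq\bullet$ comes uniquely from the $y$-case. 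The pair $(\bullet,\bullet)$ comes only from $\mathbf 1$. For $\rho$, using $\mathcal R=\{\mathbf 1\}\sqcup x(\mathcal N,\mathcal Q)$, I set $\rho(\mathbf 1)=\bullet$ and $\rho(x(u,q'))=\tau(a,b,\rho_Q(q'))$, where $\rho_Q$ is defined on $\mathcal Q$ by $\rho_Q(\mathbf 1)=\bullet$ and $\rho_Q(y(u,r))=\tau(a,b,\rho(r))$. Each of these maps peels off one root and is visibly invertible by reading the root triple.

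Injectivity and surjectivity then follow by simultaneous induction on arity, and degrees add correctly: one new internal vertex $\tau$ corresponds to one new $x$ or $y$.

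The main obstacle is purely bookkeeping: making sure the case split in $\nu$ is a genuine partition of $\mathcal T\times\mathcal T$, and that the three auxiliary maps $\nu$, $\rho$, $\rho_Q$ are well-founded and mutually inverse.
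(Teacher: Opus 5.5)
Your proposal is correct, and it takes a genuinely different route from the paper. The paper gives no real proof of this corollary. It takes $|\mathcal N_n|=a(n)$ from Theorem~\ref{DimNonComFlex}, which itself rests on Leroux's count for the $L$-operad and on the bijection $\Phi$ between normal forms of the $L$-operad and of $\non\com$-$\Flex$. It then quotes the OEIS interpretation of A006013 as pairs of ternary trees, so the bijection comes only from equality of cardinalities.

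You instead work directly with the grammar \eqref{RecNFlex1}--\eqref{RecNFlex2} and build root-peeling maps $\nu:\mathcal N\to\mathcal T\times\mathcal T$, $\rho:\mathcal R\to\mathcal T$ and $\rho_Q:\mathcal Q\to\mathcal T$. The construction checks out:
\begin{itemize}
\item The cases $\mathbf 1$, $x(u,v)$, $y(u,r)$ land exactly in the blocks $\{(\bullet,\bullet)\}$, $(\mathcal T\setminus\{\bullet\})\times\mathcal T$ and $\{\bullet\}\times(\mathcal T\setminus\{\bullet\})$, which partition $\mathcal T\times\mathcal T$.
\item The maps $(a,b,c,d)\mapsto(\tau(a,b,c),d)$ and $(a,b,e)\mapsto\tau(a,b,e)$ are bijections onto those blocks.
\item Internal-vertex counts are preserved, and every recursive call is on a strictly smaller tree, so the simultaneous induction for $\nu,\rho,\rho_Q$ and their inverses is well-founded.
\end{itemize}

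What your approach buys is a constructive, self-contained bijection. It explains structurally why $\mathcal R$ (and $\mathcal Q$) are ``ternary trees'' and $\mathcal N$ is ``pairs of ternary trees''. Through $T=1+qT^3$ and Lagrange inversion, it also reproves the formula $a(n)=\frac1n\binom{3n-2}{n-1}$ of Theorem~\ref{DimNonComFlex} without Leroux's result or the $L$-operad. The paper's route is shorter given what it has already established, but it yields only an abstract bijection and depends on an external citation.

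Two minor remarks. First, your generating-function paragraph, including $R=Q$ and $N=T^2$, is logically redundant once the explicit maps are in place; it serves only as a consistency check. Second, you use $Q$ both for the series of $\mathcal Q$ and for a component $Q'$ of a pair, which is harmless but could be renamed.
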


In particular, for the operad $\non\com\textrm{-}\Flex$, we have
\[
\begin{array}{c|cccccccccc}
 n & 1 & 2 & 3 & 4 & 5 & 6 & 7 & 8 & 9 & 10\\ \hline
 \dim(\non\com\textrm{-}\Flex(n)) & 1 &  2 &  7 &  30 &  143 &  728 &  3876 &  21318 &  120175 &  690690
\end{array}
\]

\begin{remark}
Analogues of Lemma~\ref{basisNonComFlex} and Theorem~\ref{DimNonComFlex} also hold for the nonsymmetric version of the anti-flexible operad. More precisely, the statement of Theorem~\ref{DimNonComFlex} remains unchanged, whereas in Lemma~\ref{basisNonComFlex} one only needs to reverse the sign in the defining identity.
\end{remark}


\begin{thebibliography}{99}

\bibitem{AKK2026}
H. Abdelwahab, I. Kaygorodov, A. Khudoyberdiyev, The algebraic and geometric classification of right alternative superalgebras, Rendiconti del Circolo Matematico di Palermo, 2026, 75(3), 82.

\bibitem{Aguiar}
M. Aguiar, Pre-Poisson algebras, Letters in Mathematical Physics, 2000, 54(4), 263–277.

\bibitem{BrD_Companion}
M. R. Bremner, V. Dotsenko, 
Algebraic Operads An Algorithmic Companion, Chapman Hall, New York, 2016.

\bibitem{DauSar}
A. Dauletiyarova, B. K. Sartayev, Basis of the free noncommutative Novikov algebra, Journal of Algebra and its Applications, 2025, 24(12), 2550292.


\bibitem{DotsHij}
V. Dotsenko, W. Heijltjes,
{Gr\"obner bases for operads}, 
\url{http://irma.math.unistra.fr/~dotsenko/Operads.html} (2019).


\bibitem{bicom1}
A. Dzhumadil'daev, N. Ismailov, Polynomial identities of bicommutative algebras, Lie and Jordan elements, Communications in Algebra, 2018, 46(12), 5241--5251.

\bibitem{bicom2}
A. Dzhumadil'daev, N. Ismailov, K. Tulenbaev, Free bicommutative algebras, Serdica Mathematical Journal, 2011, 37(1), 25--44.

\bibitem{bicom3}
N. A. Ismailov, F. A. Mashurov, B. K. Sartayev, On algebras embeddable into bicommutative algebras, Communications in Algebra, 2024, 52(11), 4778--4785.


\bibitem{NonComNov2}
X. Gao, L. Guo, Z. Han and Y. Zhang, Rota-Baxter operators, differential operators, pre-and Novikov structures on groups and Lie algebras, J. Algebra 684 (2025), 109-148.

\bibitem{GinzburgKapranov}
V.~Ginzburg, M.~Kapranov,
Koszul duality for operads,
{Duke Mathematical Journal}, \textbf{76} (1994), no.~1, 203--272.

\bibitem{Giraudo}
S.~Giraudo,
{Nonsymmetric Operads in Combinatorics},
1st ed., Springer, 2019.
DOI 10.1007/978-3-030-02074-3.


\bibitem{GubarevKolesnikov2013}
    V. Y. Gubarev, P. S. Kolesnikov, Embedding of dendriform algebras into Rota--Baxter algebras, Central European Journal of Mathematics, 2013, 11(2), 226--245.

   


\bibitem{KMNZ2024} I. Kaygorodov, F. Mashurov, T. G. Nam, Z. Zhang, Products of commutator ideals of some Lie-admissible algebras. Acta Mathematica Sinica, English Series, 2024, 40(8), 1875--1892.

 \bibitem{Kleinfeld}
E. Kleinfeld,
Assosymmetric rings,
Proceedings of the American Mathematical Society, 8 (5), 1957, 983--986.




\bibitem{KMS2024}
P. Kolesnikov, F. Mashurov, B. Sartayev, On Pre-Novikov Algebras and Derived Zinbiel Variety, Symmetry, Integrability and Geometry: Methods and Applications (SIGMA), 2024, 20, 17.


\bibitem{Dong}
P. S. Kolesnikov, B. K. Sartayev, On the Dong Property for a binary quadratic operad, Journal of Algebra, 2026, 691, 428–452.

\bibitem{KSO2019}
P. S. Kolesnikov, B. Sartayev, A. Orazgaliev, Gelfand--Dorfman algebras, derived identities, and the Manin product of operads, Journal of Algebra, 2019, 539, 260--284.

\bibitem{binaryperm}
A. Kunanbayev, B. Sartayev, Binary perm algebras and alternative algebras, Communications in Algebra, 541(1), 2026, 299-307.


\bibitem{May}
J.~P.~May,
{The Geometry of Iterated Loop Spaces},
Lecture Notes in Mathematics, Vol.~271, Springer, 1972.

\bibitem{MarklShniderStasheff}
M.~Markl, S.~Shnider, and J.~Stasheff,
{Operads in Algebra, Topology and Physics},
Mathematical Surveys and Monographs, Vol.~96, American Mathematical Society, 2002.


\bibitem{MK2022}
F. Mashurov, I. Kaygorodov,
One-generated nilpotent assosymmetric algebras,
Journal of Algebra and Its Applications, Vol.~21(2), 2022, 2250031.

\bibitem{Leroux}
P. Leroux, L-Algebras, triplicial-algebras, within an equivalence of categories motivated by graphs, Communications in Algebra, 2011, 39(8), 2661–2689.

\bibitem{LodayVallette}
J.-L.~Loday and B.~Vallette,
{Algebraic Operads},
Grundlehren der Mathematischen Wissenschaften, Vol.~346, Springer, 2012.

\bibitem{Loday}
J. L. Loday, Dialgebras, Dialgebras and related operads, Berlin, Heidelberg, Springer Berlin Heidelberg, 2002, 7-66.


\bibitem{erlagol2021}
B. Sartayev, P. Kolesnikov,
Noncommutative Novikov algebras, European Journal of Mathematics, 9(2), 35, 2023.



\bibitem{NonComNov1}
X. Wang, L. Guo, H. Zhang, General multi-Novikov algebras, multi-differential algebras and their free constructions, https://arxiv.org/abs/2603.16766.


\end{thebibliography}
\end{document}